\documentclass[11pt, a4paper]{amsart}
\usepackage{amsfonts,amssymb,amsmath,amsthm,amscd,mathtools,multicol,tikz, tikz-cd,caption,enumerate,mathrsfs,thmtools,cite,enumitem, float}
\usetikzlibrary{shapes, arrows.meta, positioning}
\usepackage{inputenc}
\usepackage[foot]{amsaddr}
\usepackage[pagebackref=true, colorlinks, linkcolor=blue, citecolor=red]{hyperref}
\usepackage{latexsym}
\usepackage{fullpage}
\usepackage{microtype}
\usepackage{subfiles} 
\setlist{topsep=0.5em, itemsep=1em} 

\usepackage{palatino}
\parindent 0in
\parskip .1in
\makeatletter
\makeindex 
\newcommand{\be}{\begin{equation}}
\newcommand{\ee}{\end{equation}}
\newcommand{\beano}{\begin{eqn*}} 
	\newcommand{\eeano}{\end{eqnarray*}}
\newcommand{\ba}{\begin{array}}
	\newcommand{\ea}{\end{array}}

\declaretheoremstyle[headfont=\normalfont]{normalhead}

\newtheorem{theorem}{Theorem}[section]
\newtheorem{theoremalph}{Theorem}[section]

\newtheorem{lemma}[theorem]{Lemma}
\newtheorem{corollary}[theorem]{Corollary}

\newtheorem{proposition}[theorem]{Proposition}

\theoremstyle{definition}
\newtheorem{example}{Example}[section]

\numberwithin{equation}{section}






\begin{document}
\title{Defects in weighted graphs and Commutators}
\author{Harish Kishnani}
\email{harishkishnani11@gmail.com}
\address{Indian Institute of Science Education and Research Mohali, Sector 81, Mohali 140306, India}

\author{Amit Kulshrestha}
\email{amitk@iisermohali.ac.in}
\address{Indian Institute of Science Education and Research Mohali, Sector 81, Mohali 140306, India}

\thanks{The authors thank the DST-FIST facility established through grant SR/FST/MS-I/2019/46 to support this research. The first named author acknowledges the support through the Prime Minister's Research Fellowship from the Ministry of Education, Government of India (PMRF ID: 0601097).}
\subjclass[2020]{05C25, 20D15, 13B25, 20F10}
\keywords{nilpotent groups, commutators, labeling of graphs}

\begin{abstract}
Let $R$ be a commutative ring. In \cite{KK_2025(1)}, the authors introduced $R$-weighted graphs as a tool for studying commutators in groups and Lie algebras. These graphs are equivalent to a system of balance equations, and their consistent labelings correspond to solutions of this system of balance equations. In this article, we apply these ideas in the case when $R$ is a field $F$. We focus on $F$-weighted graphs with four vertices and establish necessary and sufficient conditions for the existence of consistent labelings on them. A notion of defects in weighted graphs is introduced for this purpose. We prove that defects in weighted graphs prevent Lie brackets from being surjective onto its derived Lie subalgebra. Similarly, these defects prevent certain elements in the commutator subgroup of a nilpotent group of class $2$ from being a commutator.
As an application of our techniques, we prove that for a Lie algebra $L$ whose dimension over $F$ is at most countable and the dimension of its derived subalgebra $L'$ is at most $3$, the Lie bracket is surjective onto 
$L'$. We provide a counterexample when $\dim(L') = 4$. We also characterize commutators among $L$' for the Lie algebras $L$ with $\dim(L/Z(L))\leq 4$.
\end{abstract}
\maketitle

\section{Introduction}
Let $F$ be a field, and let $\Gamma := (V, E)$ be a graph with vertex set $V := \{v_1, v_2, \dots, v_n\}$, where the subscripts induce a complete ordering on $V$. For $i < j$, denote by $e_{i,j}$ the edge between $v_i$ and $v_j$, whenever it exists. We assign $F$-valued weights to the edges of $\Gamma$ by equipping the edge $e_{i,j}$ with weight $d_{i,j} \in F$. Such graphs are called \textit{weighted ordered simple graphs} over $F$. The sequence $(d_{i,j})_{e_{i,j} \in E}$ is called the \textit{weight sequence} of $\Gamma$. Throughout this article, by a \textit{weighted graph} we mean a weighted ordered simple graph over $F$.
When the reference to weights is irrelevant, we refer to these graphs as \textit{ordered simple graphs}, or simply a graph. A vertex $v_i$ of a weighted graph $\Gamma$ is said to be a \textit{null vertex} if the weight of each edge incidenting at $v_i$ is $0$.

A \textit{labeling} of a weighted graph $\Gamma = (V, E)$ with weight sequence $(d_{ij})_{e_{i,j} \in E}$ is an assignment  
$v_k \mapsto (a_k, b_k) \in F \times F$, for each vertex $v_k \in V$. A labeling $v_k \mapsto (a_k, b_k)$ is \textit{consistent} if $a_i b_j - a_j b_i = d_{i,j}$ for every $i < j$ satisfying $e_{i,j} \in E$.
There are weighted graphs which do not admit a consistent labeling. Let $A(n) := \{(i,j) : 1 \leq i < j \leq n\},$ 
and $A_E$ denote the indexing set of edges in $\Gamma$. Then $A_E$ is a subset of $A(n)$. The weight sequence may be thought of as a function $D : A_E \to F$ that maps the index $(i,j)$ to $d_{i,j}$.
For each $(i,j) \in A_E$, the equation $x_i y_j - x_j y_i = d_{i,j}$ is called a \textit{balance equation}.
Solving the system of these balance equations, as $(i,j)$ varies over $A_E$, is equivalent to finding a consistent labeling on $\Gamma$.

The study of commutators, in both $p$-groups and Lie algebras, via balance equations and weighted graphs was initiated in~\cite{KK_2025(1)}. In that framework, graph theoretic properties were examined to determine if a product or linear combination of commutators is itself a commutator. An obstruction that prevents elements from being a commutator, or equivalently, the corresponding graph from admitting a consistent labeling, was formulated in terms of bad cycles in graphs \cite[Theorem 3.4]{KK_2025(1)}. In this article, we focus on weighted graphs with four vertices and establish necessary and sufficient conditions for the existence of consistent labelings of $F$-weighted graphs. The obstructions to a consistent labeling are presented as certain defects of the graph. Formulation of defects for graphs with four vertices is the content of \S\ref{Defects in graphs having 4 vertices}. In \S\ref{Labeling of graphs with four vertices}, we prove that among the weighted graphs with four vertices, those admitting a consistent labeling are precisely the defectless ones.

We devote \S\ref{section on bilinear maps and Lie algebra} to apply these results to establish an analogue of L'vov-Kaplansky Conjecture, in the set up of Lie algebras whose dimension is at most countable.

Let $L$ be a Lie algebra over a field $F$. Let $[L,L]$ be the set of commutators of $L$. That is,
$$[L,L] = \{[x,y] : x,y \in L\}.$$

Let $L'$ be the set of linear combinations of elements from $[L,L]$. It is the \emph{derived Lie algebra} of $L$. A question that is analogous to L'vov-Kaplansky Conjecture is to determine if $L' = [L,L]$. In \cite{Brown_1963}, Brown proved that $[L,L] = L$ for any finite-dimensional complex simple Lie algebra. Akhiezer extended this result to most of the finite-dimensional simple real Lie algebras \cite{Akhiezer_2015}, but the problem is still open for arbitrary finite-dimensional simple real Lie algebras. For a finite-dimensional nilpotent Lie algebra with $\dim (L') \leq 4$, this problem has been investigated in \cite{Niranjan-Rani_2023}. In \cite{DKR_2021} and \cite{KMR_2024}, the authors have given examples of infinite-dimensional simple Lie algebras with $[L,L] \neq L$. The image of Lie polynomials on real Lie algebras of dimension up to 3 is studied in \cite{CF-24}. The existing results are often restricted to a type of Lie algebra, or have restrictions concerning the field. Our results do not have such restrictions.

\begin{theoremalph}
Let $F$ be a field and $L$ be a Lie algebra over $F$ with at most countable dimension. If $\dim(L') \leq 3$, then $L'=[L,L]$. (Theorem \ref{Whole space Lie algebra dim 3})
\end{theoremalph}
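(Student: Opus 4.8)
The plan is to reduce the statement to a combinatorial fact about weighted graphs on four vertices and then to invoke the labeling criterion established in \S\ref{Labeling of graphs with four vertices}.

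\emph{Step 1: reduction to sums of two commutators.} By definition $L'$ is the $F$-span of $[L,L]$, and $[L,L]$ is stable under scaling since $\lambda[a,b]=[\lambda a,b]$; hence one may extract from $[L,L]$ a basis of $L'$, and since $\dim(L')\le 3$ every $w\in L'$ is a sum of at most three commutators. Thus it suffices to show that a sum of two commutators in $L$ is again a commutator: if $u_1,u_2,u_3\in[L,L]$ and we know that $u_1+u_2\in[L,L]$, then $w=(u_1+u_2)+u_3$ is once more a sum of two commutators, hence a commutator.

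\emph{Step 2: encoding ``sum of two commutators'' as a weighted graph.} Let $u=[p,q]$ and $v=[r,s]$, put $e_1=p,\ e_2=q,\ e_3=r,\ e_4=s$, and let $W=\mathrm{span}_F(e_1,e_2,e_3,e_4)$. For $x=\sum_k a_k e_k$ and $y=\sum_k b_k e_k$ we have $[x,y]=\sum_{i<j}(a_ib_j-a_jb_i)[e_i,e_j]$, so finding $x,y$ with $[x,y]=u+v$ amounts to finding a consistent labeling of a weighted graph on vertices $v_1,\dots,v_4$ whose weight sequence $(d_{i,j})$ obeys the single linear constraint $\sum_{i<j}d_{i,j}[e_i,e_j]=[e_1,e_2]+[e_3,e_4]$ in $L'$. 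If $\dim W\le 3$ the problem collapses to a graph on at most three vertices, which always admits a consistent labeling (directly, or via the bad-cycle criterion \cite[Theorem 3.4]{KK_2025(1)}), so we may assume $\dim W=4$. Since the six brackets $[e_i,e_j]$ lie in $L'$, a space of dimension at most $3$, they satisfy at least three independent linear relations; consequently the admissible weight sequences form an affine subspace of $F^{6}$ of dimension at least $3$. It then suffices to locate within this affine family a weight sequence that is \emph{defectless} in the sense of \S\ref{Defects in graphs having 4 vertices}, for the main theorem of \S\ref{Labeling of graphs with four vertices} guarantees that such a graph has a consistent labeling, which is exactly a pair $x,y\in W$ with $[x,y]=u+v$.

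\emph{Step 3: dodging the defects --- the main difficulty.} Everything hinges on showing that the (at least $3$-dimensional) affine family of weight sequences from Step 2 always contains a defectless member, over an arbitrary field $F$ --- in particular over small finite fields, where one cannot simply argue by genericity. Here one exploits, in addition to the freedom in $(d_{i,j})$, the freedom to modify the vectors $e_i$: replacing, say, $e_1$ by $e_1+z$ with $z\in W$, or --- using that $\dim_F L$ is at most countable --- by an auxiliary vector lying outside $W$, alters both the relations among the $[e_i,e_j]$ and the target in a controlled manner. The countability hypothesis is used precisely to ensure there is always enough room to carry out such replacements and to run the attendant exhaustion over a basis of $L$. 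One then verifies, in cases organized by the rank of the configuration $\{[e_i,e_j]\}_{i<j}$ and by which of the defects from \S\ref{Defects in graphs having 4 vertices} could obstruct labelability, that a defectless representative is always available. I expect this case analysis --- not Steps 1 and 2 --- to absorb essentially all of the effort; it should also clarify why the argument breaks when $\dim(L')=4$, since then the affine family of admissible weight sequences may miss the defectless locus entirely, which is the source of the promised counterexample.
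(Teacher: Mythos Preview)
Your reduction in Step~1 is valid and is a genuine shortcut relative to the paper's argument. The paper proves the theorem by way of Proposition~\ref{Elememtwise Bilinear map dim 3}: it fixes a finite set $\mathcal B_2$ of basis vectors of $L/Z(L)$, chooses a basis of $W'=\mathrm{span}\,[\mathcal B_2,\mathcal B_2]$ consisting of brackets, and then runs a case analysis on $\dim W'\in\{1,2,3\}$ and, in the $\dim W'=3$ case, on the number of vertices $|V|\in\{4,5,6\}$ of the associated weighted graph; in every subcase it manufactures a defectless presentation. Your observation that it suffices to show ``a sum of two commutators is a commutator'' confines the problem to a $4$-dimensional $W=\mathrm{span}(p,q,r,s)$ from the outset, so only $|V|\le 4$ ever arises and the paper's subcases $|V|=5,6$ (its Case~3(b),(c)) become unnecessary. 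What remains as your Step~3 is essentially the paper's $4$-vertex analysis --- its Cases~1, 2 and~3(a) --- and that is indeed where all the work sits; you have not shortened that part, only isolated it.

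Two corrections to your Step~3 sketch. First, the $4$-vertex analysis in the paper never leaves $W$: it succeeds purely by exploiting the linear relations among the six brackets $[e_i,e_j]$ (equivalently, by moving within your $\ge 3$-dimensional affine family of weight sequences) to locate a defectless presentation, and sometimes by re-choosing which brackets form the basis $\mathcal B_{W'}$ of $W'$; it never introduces an auxiliary vector outside $\mathrm{span}(e_1,\dots,e_4)$. Proposing to do so would undo the very benefit of your Step~1 reduction, since you would be back to graphs on five or more vertices. Second, the countability hypothesis plays no role here: once the question is localized to a $4$-dimensional $W$, everything is finite-dimensional, so your remark that countability is ``used precisely'' to make room for auxiliary vectors is misplaced. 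In short, your plan is correct and mildly more economical than the paper's, but the decisive case analysis is the same as the paper's $4$-vertex case, and your hints for how to execute it point in the wrong direction.
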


When $\dim(L') \geq 4$, our techniques allow us to construct counterexamples to the analogue of L'vov-Kaplansky Conjecture
by enabling precise characterization of elements in $L'$ which are commutators. For $x \in L'$, a presentation of $x$ is the weight sequence $D = (d_{i,j})_{e_{i,j} \in E}$ of a weighted graph that is constructed through the process described in the beginning of \S\ref{section on bilinear maps and Lie algebra}. We analyze these weighted graphs for defects to determine if $x$ is a commutator.

\begin{theoremalph}
Let $F$ be a field and $L$ be a Lie algebra over $F$ with at most countable dimension. Let $x \in L'$ be such that for some presentation $D$ of $x$, the corresponding weighted graph $\Gamma$ has four vertices and is defectless. Then $x \in [L,L]$. (Theorem \ref{Elementwise Lie algebra without defects theorem})
\end{theoremalph}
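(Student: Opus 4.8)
The plan is to reduce the statement about elements of $L'$ to the graph-theoretic results proved in the earlier sections, via the translation mechanism sketched at the start of \S\ref{section on bilinear maps and Lie algebra}. So first I would recall explicitly how a presentation $D$ of $x\in L'$ is manufactured: writing $x=\sum_{k} [u_k,w_k]$ as a finite sum of commutators and then, using bilinearity and antisymmetry of the bracket, collecting terms so that $x$ is expressed relative to a fixed (at most countable) ordered basis; the vertices $v_1,\dots,v_n$ of $\Gamma$ correspond to a finite collection of basis-adapted vectors $z_1,\dots,z_n$, a candidate commutator $[\sum_i a_i z_i, \sum_j b_j z_j]$ expands as $\sum_{i<j}(a_ib_j-a_jb_i)[z_i,z_j]$, and the requirement that this equal $x$ becomes exactly the system of balance equations $a_ib_j-a_jb_i=d_{i,j}$ attached to the edges of $\Gamma$. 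Thus a consistent labeling of $\Gamma$ yields vectors $p=\sum_i a_i z_i$ and $q=\sum_j b_j z_j$ with $[p,q]=x$.

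The second step is to invoke the four-vertex classification: since $\Gamma$ has four vertices and is defectless, the theorem of \S\ref{Labeling of graphs with four vertices} (the result stating that a four-vertex weighted graph admits a consistent labeling iff it is defectless) gives us an explicit consistent labeling $v_k\mapsto(a_k,b_k)$. Feeding this labeling back through the translation of the first step produces $p,q\in L$ with $[p,q]=x$, which is precisely the assertion $x\in[L,L]$. The only subtlety here is bookkeeping: one must make sure that the edge set $A_E$ of $\Gamma$ really does capture \emph{all} the constraints — i.e.\ that pairs $(i,j)$ for which $[z_i,z_j]$ does not appear in the chosen expression of $x$ correspond either to absent edges or to edges of weight $0$, and that assigning an arbitrary value to such a balance expression does no harm. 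This is routine but needs to be said, because the definition of consistent labeling only constrains $a_ib_j-a_jb_i$ on edges of $E$.

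I expect the main obstacle to be precisely the faithfulness of the graph-to-Lie-algebra dictionary rather than any new computation: one has to argue that the $z_i$'s can be chosen so that the relevant brackets $[z_i,z_j]$ are "independent enough'' that matching coefficients in $L'$ is equivalent to solving the balance system, with no hidden linear relations among the $[z_i,z_j]$ forcing extra equations beyond those recorded by $\Gamma$. If the presentation is built, as in \S\ref{section on bilinear maps and Lie algebra}, by taking $z_1,\dots,z_n$ to be among basis vectors (or suitably generic combinations) chosen after fixing a basis of $L$, then the coefficients $d_{i,j}$ are read off uniquely and the equivalence is exact; I would make this precise and then the proof is essentially immediate. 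A minor additional point is the countability hypothesis: it is used only to guarantee that $x$, being a finite linear combination of commutators, lives inside a finitely-generated-over-$F$ (hence manageable) subalgebra, so that the finite graph $\Gamma$ genuinely suffices — no transfinite issues arise. Assembling these observations, the proof reads: construct $\Gamma$ from the given presentation $D$; apply the four-vertex defectless $\Rightarrow$ consistently-labelable theorem to obtain $(a_k,b_k)$; set $p=\sum a_k z_k$, $q=\sum b_k z_k$; verify $[p,q]=x$ by expanding and using the balance equations; conclude $x\in[L,L]$.
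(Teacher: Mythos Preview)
Your approach is correct and matches the paper's: both are direct applications of Theorem~\ref{4 vertices classification} through the dictionary set up at the start of \S\ref{section on bilinear maps and Lie algebra} (the paper routes this through Proposition~\ref{Elementwise Bilinear map without defects proposition} for general alternating bilinear maps and then specializes to the Lie bracket). One clarification: your anticipated ``main obstacle'' about the brackets $[z_i,z_j]$ being ``independent enough'' is not actually needed in this direction --- once a consistent labeling $(a_k,b_k)$ is in hand, the expansion $[p,q]=\sum_{i<j}(a_ib_j-a_jb_i)[z_i,z_j]$ agrees \emph{term by term} with $\sum_{e_{i,j}\in E} d_{i,j}[z_i,z_j]=x$ (non-edges contribute zero because $[z_i,z_j]=0$ there by construction of $\Gamma$), so the equality $[p,q]=x$ holds as an identity of elements of $L$ without any appeal to coefficient-matching in a basis of $L'$.
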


Conversely, we have the following result.

\begin{theoremalph}
Let $F$ be a field and $L$ be a Lie algebra over $F$ with at most countable dimension. Let $x \in L'$ be such that for each presentation $D$ of $x$, the corresponding weighted graph $\Gamma$ contains a defect of type $(m)_A$, $(4)_B$ or $(4)_C$. Then $x \notin [L,L]$. (Theorem \ref{Elementwise Lie algebra defects theorem})
\end{theoremalph}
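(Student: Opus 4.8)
The plan is to argue by contraposition. Assume $x = [y,z]$ for some $y,z \in L$; the goal is to produce a presentation $D$ of $x$ whose weighted graph admits a consistent labeling, which contradicts the hypothesis. The bridge between the algebra and the combinatorics is the correspondence set up at the beginning of \S\ref{section on bilinear maps and Lie algebra}: if we expand $y = \sum_k a_k e_k$ and $z = \sum_k b_k e_k$ with respect to a finite family $e_1,\dots,e_n \in L$ (chosen so that the relevant brackets behave as the construction requires), then $x = [y,z] = \sum_{i<j}(a_i b_j - a_j b_i)\,[e_i,e_j]$, so the weight sequence $d_{i,j} = a_i b_j - a_j b_i$ is a presentation of $x$ for which the assignment $e_k \mapsto (a_k, b_k)$ is a consistent labeling. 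Granting that this is indeed a presentation in the sense of \S\ref{section on bilinear maps and Lie algebra}, it suffices to prove that a weighted graph carrying a defect of type $(m)_A$, $(4)_B$, or $(4)_C$ admits \emph{no} consistent labeling; then no presentation of $x$ can arise from a commutator, forcing $x \notin [L,L]$.

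For the defects $(4)_B$ and $(4)_C$ the ambient graph has four vertices, and the assertion that a four-vertex weighted graph carrying a defect admits no consistent labeling is precisely the content of the classification proved in \S\ref{Labeling of graphs with four vertices}; these cases are therefore immediate once that theorem is quoted. Moreover, since restricting a consistent labeling to any four-vertex subgraph yields a consistent labeling of that subgraph, a consistently labelable graph of any size cannot contain a $(4)_B$ or $(4)_C$ defect on a four-vertex subgraph either, which keeps the argument consistent when the presentation has more than four vertices.

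For a defect of type $(m)_A$, which can occur in graphs of arbitrary size, I would give a direct obstruction argument in the spirit of the bad-cycle criterion \cite[Theorem 3.4]{KK_2025(1)}: such a defect exhibits a closed configuration of edges whose weights cannot simultaneously be realized as the $2\times 2$ expressions $a_i b_j - a_j b_i$. Combining the balance equations around this configuration (after the appropriate normalization) forces a relation among the involved $d_{i,j}$ that the defect precisely violates, so that no consistent labeling of the configuration, hence of the whole graph, can exist. Assembling the three cases: if every presentation $D$ of $x$ yields a weighted graph $\Gamma$ with a defect of one of the listed types, then no presentation of $x$ admits a consistent labeling, so by the correspondence $x$ is not of the form $[y,z]$, i.e. $x \notin [L,L]$.

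I expect the main obstacle to be the first step: pinning down the correspondence carefully enough that an \emph{arbitrary} representation $x = [y,z]$ is guaranteed to be captured by a consistent labeling of \emph{some} legitimate presentation of $x$. Concretely, one must ensure that a presentation may be enlarged to include any prescribed finite subset of $L$ (in particular, elements involved in writing $y$ and $z$) without altering the element presented, and that the enlarged weighted graph still qualifies as a presentation; the at-most-countable-dimension hypothesis is exactly what permits this to be done with finite weighted graphs. A secondary technical point is to make the $(m)_A$ obstruction argument uniform in $m$, so that it rules out consistent labelings for presentations of every size rather than only for the four-vertex graphs treated in \S\ref{Labeling of graphs with four vertices}.
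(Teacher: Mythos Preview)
Your approach is essentially the paper's: the correspondence at the start of \S\ref{section on bilinear maps and Lie algebra} states outright that $x$ lies in $[L,L]$ iff some presentation of $x$ admits a consistent labeling, and the paper then simply invokes Lemmas~\ref{defect (m)_A Lemma}--\ref{defect (4)_C Lemma} to conclude (via Proposition~\ref{Elementwise Bilinear map defects proposition}). The obstacles you anticipate are not real: presentations are defined relative to a \emph{fixed} basis $\mathcal B=\mathcal B_1\cup\mathcal B_2$ with $\mathcal B_1$ a basis of $Z(L)$, so any $y,z\in L$ expand in $\mathcal B$ with finitely many nonzero coefficients and the resulting $d_{i,j}=a_ib_j-a_jb_i$ is automatically a legitimate presentation with the tautological consistent labeling---no enlargement step is needed---and the $(m)_A$ case requires no new argument since Lemma~\ref{defect (m)_A Lemma} is already stated for arbitrary $m$.
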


We emphasize that our techniques are not restricted to Lie brackets and they work for any alternating bilinear map. Further, in \S\ref{section for results on p-groups} we deploy these techniques to provide results on the commutators in $p$-group of nilpotency class $2$.
We denote by $K(G)$ the set of commutators in $G$, and by $G'$ the commutator subgroup of $G$.
For $g \in G'$ we construct presentations and corresponding weighted graphs in \S\ref{section for results on p-groups}, and obtain the following results. For a detailed discussion on the significance of these results, we refer to \cite{Kappe_Morse_2007}. In \cite{KK_2025(1)} too, we have addressed such questions.

\begin{theoremalph}
Let $G$ be a $p$-group of nilpotency class $2$ and exponent $p$. Let $g \in G'$ be such that for some presentation of $g$, the corresponding weighted graph $\Gamma$ has four vertices and is defectless. Then $g \in K(G)$. (Theorem \ref {Elementwise group without defects theorem})
\end{theoremalph}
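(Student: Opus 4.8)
\emph{Overview.} The plan is to push the four-vertex labeling theorem of \S\ref{Labeling of graphs with four vertices} through the commutator calculus of a class-$2$ exponent-$p$ group, mirroring the bilinear-map argument behind Theorem~B with the Lie bracket replaced by the group commutator.

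\emph{Step 1 (unpacking the presentation).} Since $G$ is nilpotent of class $2$, the subgroup $G'$ is central, so for all $x,y,z\in G$ one has $[xy,z]=[x,z][y,z]$, $[x,yz]=[x,y][x,z]$ and $[x^m,y]=[x,y]^m$; together with exponent $p$ this gives $[x,y]^p=[x^p,y]=1$, so $G'$ is an $\mathbb F_p$-vector space and $(x,y)\mapsto[x,y]$ induces an alternating $\mathbb F_p$-bilinear map $\bar G\times\bar G\to G'$, where $\bar G=G/Z(G)$. By the construction of \S\ref{section for results on p-groups}, a presentation $D=(d_{i,j})_{e_{i,j}\in E}$ of $g\in G'$ is accompanied by elements $u_1,\dots,u_n\in G$ for which
\[
g=\prod_{e_{i,j}\in E}[u_i,u_j]^{d_{i,j}},\qquad [u_i,u_j]=1\text{ whenever }e_{i,j}\notin E ,
\]
and $\Gamma$ is the weighted graph on $v_1,\dots,v_n$ carrying the weights $d_{i,j}$; by hypothesis some such presentation has $n=4$ and $\Gamma$ defectless.

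\emph{Step 2 (producing a commutator).} By the main theorem of \S\ref{Labeling of graphs with four vertices}, the defectless graph $\Gamma$ admits a consistent labeling $v_k\mapsto(a_k,b_k)\in\mathbb F_p\times\mathbb F_p$, so that $a_ib_j-a_jb_i=d_{i,j}$ for every $e_{i,j}\in E$. Put
\[
x=u_1^{a_1}u_2^{a_2}u_3^{a_3}u_4^{a_4},\qquad y=u_1^{b_1}u_2^{b_2}u_3^{b_3}u_4^{b_4}\in G ,
\]
well defined because $G$ has exponent $p$. Expanding the commutator with the identities from Step 1, together with $[u_k^{a},u_l^{b}]=[u_k,u_l]^{ab}$, $[u_k,u_k]=1$ and $[u_l,u_k]=[u_k,u_l]^{-1}$ (all factors lying in the abelian group $G'$, so reordering is harmless), we obtain
\[
[x,y]=\prod_{1\le k,l\le 4}[u_k,u_l]^{a_kb_l}=\prod_{1\le i<j\le 4}[u_i,u_j]^{a_ib_j-a_jb_i}=\prod_{e_{i,j}\in E}[u_i,u_j]^{d_{i,j}}=g ,
\]
where the third equality discards the trivial $e_{i,j}\notin E$ factors and applies consistency to the others. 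Hence $g\in K(G)$.

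\emph{The main point.} All the genuine content is carried by the labeling theorem invoked in Step 2; what remains is bookkeeping. One must check that the machinery of \S\ref{section for results on p-groups} indeed outputs the normal form of Step 1 with scalars in $\mathbb F_p$, and that the commutator expansion is legitimate in a class-$2$ group, so that a consistent labeling of $\Gamma$ --- a priori only a statement about elements of the field $\mathbb F_p$ --- can be read back as a tuple of exponents of group elements. This is exactly the group-theoretic counterpart of the bilinear computation behind Theorem~B, and presents no new difficulty; the only spot requiring care is keeping track of which pairs $(i,j)$ index edges versus which are commuting pairs, since the weighted graphs here need not be complete.
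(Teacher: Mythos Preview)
Your proof is correct and follows exactly the route the paper intends: the paper itself derives Theorem~\ref{Elementwise group without defects theorem} as an immediate consequence of Theorem~\ref{4 vertices classification}, and you have simply spelled out the standard class-$2$ commutator expansion that turns a consistent $\mathbb F_p$-labeling into an explicit commutator, together with the observation that non-edge pairs contribute trivially. There is nothing to add.
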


\begin{theoremalph}
Let $G$ be a $p$-group of nilpotency class $2$. Let $g \in G'$ be such that for all presentations 
of $g$, the corresponding weighted graph $\Gamma$ contains a defect of type $(m)_A$, $(4)_B$ or $(4)_C$. Then $g \notin K(G)$. (Theorem \ref{Elementwise group defects theorem})
\end{theoremalph}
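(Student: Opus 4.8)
The plan is to establish the contrapositive. Assume $g\in K(G)$. The argument runs parallel to that of Theorem~\ref{Elementwise Lie algebra defects theorem}, with the alternating Lie bracket replaced by the $\mathbb{Z}$-bilinear alternating commutator pairing on $G/Z(G)$. The goal is to exhibit a presentation of $g$ whose weighted graph admits a consistent labeling; since, by hypothesis, every presentation of $g$ carries a defect of type $(m)_A$, $(4)_B$ or $(4)_C$, and since such a defect obstructs consistent labelings, this yields a contradiction.

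To produce the presentation I would follow the construction in \S\ref{section for results on p-groups}. Fix a generating set $u_1,\dots,u_n$ of $G$ modulo $Z(G)$, so that the commutators $[u_i,u_j]$ with $i<j$ generate $G'$. Writing $g=[x,y]$ with $x\equiv\prod_i u_i^{a_i}$ and $y\equiv\prod_i u_i^{b_i}$ modulo $Z(G)$, where $a_i,b_i\in\mathbb{Z}$, the class-$2$ commutator identities give $g=\prod_{i<j}[u_i,u_j]^{a_ib_j-a_jb_i}$. Reducing the exponents modulo $p$ yields a presentation $D=(d_{i,j})$ of $g$ over the prime field $\mathbb{F}_p$, and by construction $v_i\mapsto(\bar a_i,\bar b_i)\in\mathbb{F}_p\times\mathbb{F}_p$ is a consistent labeling of the associated weighted graph $\Gamma$. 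Only the implication ``$g$ a commutator implies some presentation of $g$ is consistently labelable'' is needed; the converse would require exponent $p$, which is exactly why that hypothesis appears in Theorem~\ref{Elementwise group without defects theorem} but not here.

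The second ingredient is the obstruction statement: a weighted graph $\Delta$, on any number of vertices, that contains a defect of type $(m)_A$, $(4)_B$ or $(4)_C$ has no consistent labeling. For a defect of type $(4)_B$ or $(4)_C$, the defect is localized on four vertices; the weighted subgraph they induce still exhibits it, and by the classification in \S\ref{Labeling of graphs with four vertices} that four-vertex graph has no consistent labeling; since a consistent labeling of $\Delta$ restricts to a consistent labeling of any induced subgraph, $\Delta$ has none. For a defect of type $(m)_A$ one uses the direct argument of \S\ref{Defects in graphs having 4 vertices}, which extends the bad-cycle obstruction of \cite[Theorem~3.4]{KK_2025(1)} to general $m$. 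Combining the ingredients finishes the proof: the graph $\Gamma$ built above is consistently labelable, while the hypothesis forces $D$ to carry one of the three defect types, each of which precludes a consistent labeling --- a contradiction, so $g\notin K(G)$.

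The main obstacle is making the first step work uniformly over all $p$-groups of class $2$, not merely those of exponent $p$: one must check that passing from the integer exponents $a_ib_j-a_jb_i$ to their residues modulo $p$ indeed produces a presentation of $g$ in the precise sense of \S\ref{section for results on p-groups}, and that nothing relevant to the presence of a defect is lost in this reduction. A secondary point is that a presentation of $g$ need not have exactly four vertices, so the four-vertex classification must be deployed through the restriction-to-induced-subgraph step rather than applied directly, and when $\Gamma$ has fewer than four vertices one should verify that padding with null vertices neither creates nor destroys defects, so that the hypothesis remains applicable.
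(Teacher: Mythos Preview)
Your approach is correct and matches the paper's: the paper's entire proof is the single line ``a consequence of Lemma~\ref{defect (m)_A Lemma}--\ref{defect (4)_C Lemma}'', which is exactly your contrapositive argument---if $g=[x,y]$ then the exponents $d_{i,j}=a_ib_j-a_jb_i$ furnish a presentation whose graph is consistently labeled by $v_i\mapsto(a_i,b_i)$, contradicting the defect lemmas. Your caveats about the mod~$p$ reduction and about passing to induced subgraphs are well taken (and indeed the paper is somewhat casual on these points), but note that for the obstruction step you can invoke Lemmas~\ref{defect (m)_A Lemma}--\ref{defect (4)_C Lemma} directly rather than routing through the four-vertex classification of \S\ref{Labeling of graphs with four vertices}, since those lemmas already apply to arbitrary weighted graphs containing the relevant defect as a subgraph.
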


\section{Defects in graphs with four vertices}\label{Defects in graphs having 4 vertices}
In this section, we characterize obstructions to consistent labelings and call them defects in weighted graphs.

Two graphs $\Gamma = (V, E)$ and $\Gamma' = (V', E')$ over a field $F$ are \textit{isomorphic} if there exists a bijection $\varphi \colon V \to V'$ preserving adjacency, \textit{i.e.} vertices $v_i$ and $v_j$ are adjacent in $\Gamma$ if and only if $\varphi(v_i)$ and $\varphi(v_j)$ are adjacent in $\Gamma'$. Such a map
$\varphi$ is called an \textit{isomorphism} of graphs, and we write $\Gamma \simeq \Gamma'$ to denote that the two graphs are isomorphic.

Two weighted graphs $\Gamma = (V, E)$ and $\Gamma' = (V', E')$, with vertex sets $V = \{v_1, \dots, v_n\}$ and $V' = \{v'_1, \dots, v'_n\}$, and weight sequences $(d_{i,j})_{e_{i,j} \in E}$ and $(d'_{i,j})_{e'_{i,j} \in E'}$ respectively, are called \textit{weighted isomorphic} if there exists a bijection $\varphi : V \to V'$ which is a graph isomorphism between $\Gamma$ and $\Gamma'$ satisfying
$$
d_{i,j} = \begin{cases} d'_{k,\ell}, \quad \quad \text{ if } v'_k = \varphi(v_i) < \varphi(v_j) = v'_{\ell},\\ 
 -d'_{\ell,k}, ~~~~\quad \text{ if } v'_k = \varphi(v_i) > \varphi(v_j) = v'_{\ell}. 
\end{cases}
$$

We denote $\Gamma \simeq_{w} \Gamma'$ to express that the two weighted graphs are isomorphic.

A weighted graph $\Gamma_1 = (V_1, E_1)$ is called a \textit{subgraph} of a weighted graph $\Gamma_2 = (V_2, E_2)$ if
\begin{enumerate}
\item[(i).] $V_1 \subseteq V_2$ and $E_1 \subseteq E_2$.
\item[(ii).] For every edge $e_{i,j} \in E_1$, its weight in $\Gamma_1$ equals its weight in $\Gamma_2$.
\end{enumerate}

We recall the definitions of bad cycles and unfavorable proximity from \cite{KK_2025(1)}.
A simple cycle $C_r: v_{i_1} \to v_{i_2} \to \dots \to v_{i_r} \to v_{i_1}$ in a graph $\Gamma$ is a \textit{bad cycle} if $\deg_{\Gamma}(v_{i_j}) > 2$ for $j \leq r-2$; and $\deg_{\Gamma}(v_{i_{r-1}})= \deg_{\Gamma}(i_{i_r}) = 2$. For each vertex $v_{i_j}$, $1 \leq j \leq r-2$ a bad cycle $C_r$, we pick an adjacent vertex $v_{i_j+r}$ in $\Gamma$ that lies outside $C_r$. The subgraph $\mathcal{P}$ of $\Gamma$ containing $C_r$, vertices $v_{i_j+r}$, and the edges between $v_{i_j}$ and $v_{i_j+r}$ is called a \textit{proximity} for $C_r$ in $\Gamma$. The proximity $\mathcal{P}$ is said to be \textit{unfavorable} if $d_{i_1, i_2} = d_{i_2, i_3} = \cdots = d_{i_{r-2}, i_{r-1}} = d_{i_1, i_r} = 0$ and $d_{i_{r-1}, i_r}$, $d_{i_1, i_{r+1}}$, $d_{i_2, i_{r+2}}, \dots, d_{i_{r-2}, i_{2r-2}} \neq 0$.

{\bfseries Defect of type $(m)_A$}. A weighted graph $\Gamma$ is said to have a \emph{defect of type $(m)_{A}$} if it contains a weighted subgraph that is weighted isomorphic to an unfavorable proximity $\mathcal P$ for a bad cycle in $\frac{1}{2}(m+2)$ vertices. We note that $m$ is necessarily an even integer which is equal to the number of vertices in $\mathcal P$. The subscript $A$ signifies the nature of restrictions on weights of $\Gamma$, that obstructs it from admitting a consistent labeling \cite[Theorem 3.4]{KK_2025(1)}.

\begin{lemma}[Theorem 3.4,\cite{KK_2025(1)}]\label{defect (m)_A Lemma}
Let $\Gamma$ be a weighted graph with a defect of type $(m)_A$. Then $\Gamma$ does not admit a consistent labeling.
\end{lemma}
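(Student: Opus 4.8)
This is essentially \cite[Theorem 3.4]{KK_2025(1)}, and the plan is to recover it directly from the definition of a consistent labeling. The first step is to record two stability features of the property ``$\Gamma$ admits a consistent labeling''. It is inherited by weighted subgraphs in the following sense: if $v_k \mapsto (a_k, b_k)$ is a consistent labeling of $\Gamma$ and $\Gamma_1$ is a weighted subgraph, the restriction to $V_1$ is a consistent labeling of $\Gamma_1$, since the balance equations of $\Gamma_1$ form a subsystem of those of $\Gamma$ with identical weights. And it is invariant under $\simeq_w$: if $\varphi$ realizes $\Gamma \simeq_w \Gamma'$, then transporting a consistent labeling of $\Gamma'$ back along $\varphi$ yields one of $\Gamma$ --- the two-case sign convention in the definition of $\simeq_w$ is precisely what makes $a_i b_j - a_j b_i = d_{i,j}$ hold for every edge $e_{i,j}$ with $i < j$, after a short case check on whether $\varphi$ preserves or reverses the order of the endpoints. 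Combining these, it suffices to prove that an unfavorable proximity $\mathcal{P}$ for a bad cycle $C_r$ on $r = \tfrac{1}{2}(m+2)$ vertices admits no consistent labeling, because by definition $\Gamma$ contains a weighted subgraph weighted-isomorphic to such a $\mathcal{P}$.

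The second step is to translate balance equations into plane linear algebra. To a vertex $v_k$ attach the vector $w_k := (a_k, b_k) \in F^2$; then $a_i b_j - a_j b_i$ is the $2\times 2$ determinant with columns $w_i$ and $w_j$, so a consistent labeling is exactly an assignment $v_k \mapsto w_k$ with that determinant equal to $d_{i,j}$ on each edge. Hence along an edge of weight $0$ the endpoint vectors are linearly dependent (parallel), while along an edge of nonzero weight they are linearly independent, and in particular both nonzero.

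The third step is the contradiction inside $\mathcal{P}$. Write $C_r \colon v_{i_1} \to \cdots \to v_{i_r} \to v_{i_1}$ with external vertices $v_{i_{r+1}}, \dots, v_{i_{2r-2}}$ as in the definition of unfavorable proximity, so that $d_{i_{r-1}, i_r} \neq 0$, the external weights $d_{i_1, i_{r+1}}, \dots, d_{i_{r-2}, i_{2r-2}}$ are all nonzero, and the remaining weights of $\mathcal{P}$ --- the path edges $e_{i_1, i_2}, \dots, e_{i_{r-2}, i_{r-1}}$ together with $e_{i_1, i_r}$ --- all vanish. Suppose $\mathcal{P}$ had a consistent labeling $v_k \mapsto w_k$. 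The nonzero external edges force $w_{i_1}, \dots, w_{i_{r-2}}$ to be nonzero, and $d_{i_{r-1}, i_r} \neq 0$ forces $w_{i_{r-1}}$ and $w_{i_r}$ to be nonzero as well; thus all of $w_{i_1}, \dots, w_{i_r}$ are nonzero, so ``being parallel'' is an honest equivalence relation among them. Along the zero-weight path $v_{i_1} - v_{i_2} - \cdots - v_{i_{r-1}}$ consecutive vectors are parallel, so $w_{i_1} \parallel w_{i_{r-1}}$ by transitivity, while $d_{i_1, i_r} = 0$ gives $w_{i_1} \parallel w_{i_r}$; hence $w_{i_{r-1}} \parallel w_{i_r}$. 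But that contradicts $d_{i_{r-1}, i_r} \neq 0$. So $\mathcal{P}$ admits no consistent labeling, and by the first step neither does $\Gamma$.

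I do not expect a serious obstacle here; the only point needing care is the bookkeeping that guarantees \emph{every} vector attached to a vertex of $C_r$ is certified nonzero before transitivity of parallelism is invoked, since that transitivity collapses the moment a zero vector enters. One should also check the degenerate base case $r = 3$ (corresponding to $m = 4$), where the zero-weight path reduces to the single edge $e_{i_1, i_2}$ and $\mathcal{P}$ is the four-vertex graph on $v_{i_1}, v_{i_2}, v_{i_3}, v_{i_4}$ with weight pattern $d_{i_1,i_2} = d_{i_1,i_3} = 0$, $d_{i_2,i_3}, d_{i_1,i_4} \neq 0$; the argument above specializes verbatim. It is worth noting that the degree hypotheses in the definition of a bad cycle are used only to ensure the external neighbors exist so that $\mathcal{P}$ is well defined; the non-labelability argument itself depends solely on the weight pattern of $\mathcal{P}$.
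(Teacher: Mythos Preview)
Your argument is correct. The reduction to an unfavorable proximity via inheritance by subgraphs and invariance under $\simeq_w$ is exactly right, and the determinant/parallelism interpretation in $F^2$ cleanly produces the contradiction: the external nonzero-weight edges pin down $w_{i_1},\dots,w_{i_{r-2}}$ as nonzero, the edge $e_{i_{r-1},i_r}$ pins down $w_{i_{r-1}},w_{i_r}$ as nonzero, transitivity of parallelism along the zero-weight chain then forces $w_{i_{r-1}}\parallel w_{i_r}$, contradicting $d_{i_{r-1},i_r}\neq 0$. The $r=3$ specialization and the remark that the degree hypotheses only serve to guarantee the external neighbors exist are both accurate.

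As for comparison with the paper: the paper does not actually prove this lemma. It is stated with the header ``Theorem 3.4, \cite{KK_2025(1)}'' and no proof is given; the result is simply imported from the earlier article. So you have supplied a self-contained proof where the present paper offers only a citation. Your argument is presumably close in spirit to what appears in \cite{KK_2025(1)}, since the determinant/linear-dependence reading of the balance equations is the natural one, but within the scope of this paper there is nothing further to compare against.
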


The smallest value of $m$ for which a graph may possess a defect of type $(m)_{A}$ is $4$. In this article, our focus is on weighted graphs with four vertices. The graph $\Gamma_1$ in Figure \ref{defect (4)_A} has a defect of type $(4)_A$.
\begin{center}
\begin{figure}[h]
\begin{tikzpicture}
  [scale=0.8,auto=left,every node/.style={circle,fill=blue!20}]
  \node (n3) at (9,1)      {$v_3$};  
  \node (n1) at (5,5)      {$v_1$};
  \node (n2) at (1,1)      {$v_2$};
  \node (n4) at (13,5)     {$v_4$}; 
  \draw (n1) -- node[midway, fill = none] {$0$} (n3);  
  \draw (n2) -- node[midway, below, fill = none] {$0$} (n3);
  \draw (n1) -- node[midway, left, fill = none] {$\neq 0$} (n2);  
  \draw (n3) -- node[midway, left, fill = none] {$\neq 0$} (n4);
  \foreach \from/\to in {n2/n3,n1/n3,n1/n2,n3/n4}
      \draw (\from) -- (\to);
\end{tikzpicture}
\caption{{\bfseries Defect $(4)_A$}: Graph $\Gamma_1$ with $d_{1,2} \ne 0$, $d_{3,4} \ne 0$ and $d_{1,3}=d_{2,3}=0$.}\label{defect (4)_A}
\end{figure}
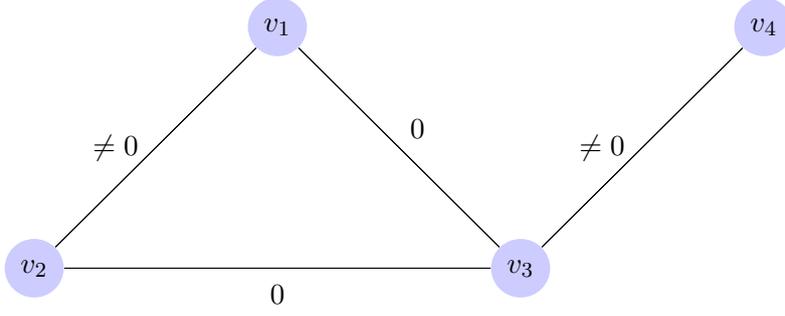
\end{center}


We remark that a consistent labeling of a weighted graph passes onto a consistent labeling of its weighted
subgraphs. Thus, if a weighted subgraph of a weighted graph $\Gamma$ does not admit a consistent labeling, then
$\Gamma$ itself does not admit a consistent labeling.

{\bfseries Defect of type $(4)_B$}. A weighted graph $\Gamma$ is said to have a \emph{defect of type $(4)_B$} if it contains a weighted subgraph that is weighted isomorphic to the graph $\Gamma_2$ in Figure \ref{defect (4)_B}.
\begin{center}
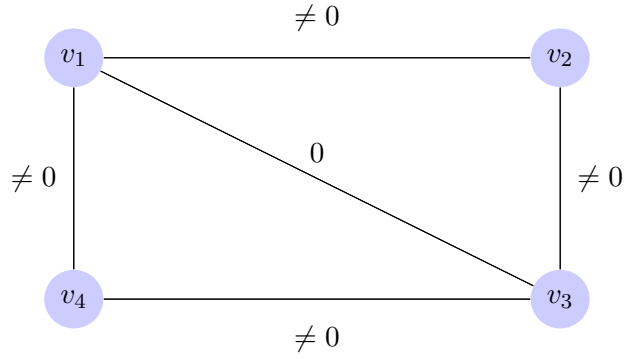
\begin{figure}[h]
\begin{tikzpicture}
  [scale=0.8,auto=left,every node/.style={circle,fill=blue!20}]
  \node (n3) at (9,1)      {$v_3$};
  \node (n1) at (1,5)      {$v_1$};
  \node (n4) at (1,1)      {$v_4$};
  \node (n2) at (9,5)     {$v_2$}; 
  \draw (n1) -- node[midway, fill = none] {$\neq 0$} (n2);  
  \draw (n1) -- node[midway, left, fill = none] {$\neq 0$} (n4);
  \draw (n2) -- node[midway, right, fill = none] {$\neq 0$} (n3);  
  \draw (n3) -- node[midway, below, fill = none] {$\neq 0$} (n4);
  \draw (n1) -- node[midway, above, fill = none] {$0$} (n3);
  \foreach \from/\to in {n2/n3,n1/n3,n1/n4,n3/n4,n1/n2}
      \draw (\from) -- (\to);
\end{tikzpicture}
\caption{{\bfseries Defect $(4)_B$}: Graph $\Gamma_2$ with $d_{1,2}d_{1,4}d_{2,3}d_{3,4} \ne 0$, $d_{1,3}=0$ and $d_{1,2}d_{3,4} \ne -d_{1,4}d_{2,3}$.}\label{defect (4)_B}
\end{figure}
\end{center}

\begin{lemma}\label{defect (4)_B Lemma}
Let $\Gamma$ be a weighted graph with a defect of type $(4)_B$. Then $\Gamma$ does not admit a consistent labeling.
\end{lemma}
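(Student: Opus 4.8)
The plan is to reduce immediately to the graph $\Gamma_2$ of Figure~\ref{defect (4)_B} and then run a short two‑dimensional determinant argument. Since the weights of a subgraph agree with those of the ambient graph, a consistent labeling of $\Gamma$ restricts to a consistent labeling of any weighted subgraph; and if a weighted subgraph $H$ of $\Gamma$ satisfies $H \simeq_{w} \Gamma_2$ via a vertex bijection $\psi$, then transporting a consistent labeling of $H$ through $\psi$ yields a consistent labeling of $\Gamma_2$ — the two cases in the definition of weighted isomorphism are exactly the sign bookkeeping that makes $a_i b_j - a_j b_i$ come out to the prescribed weight regardless of whether $\psi$ reverses the order of a given pair of vertices. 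Hence it suffices to show that $\Gamma_2$ admits no consistent labeling.

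So I would assume, for contradiction, that $v_k \mapsto (a_k,b_k)$ is a consistent labeling of $\Gamma_2$, abbreviate $w_k = (a_k,b_k) \in F^2$, and write $[w_i,w_j] := a_i b_j - a_j b_i$ for the $2\times 2$ determinant, which is an alternating bilinear form on $F^2$. The five edges of $\Gamma_2$ then read $[w_1,w_2] = d_{1,2}$, $[w_1,w_3] = d_{1,3} = 0$, $[w_1,w_4] = d_{1,4}$, $[w_2,w_3] = d_{2,3}$, $[w_3,w_4] = d_{3,4}$, with $d_{1,2}d_{1,4}d_{2,3}d_{3,4}\neq 0$. Because $[w_1,w_2] = d_{1,2}\neq 0$, the vectors $w_1,w_2$ form a basis of $F^2$, so I would write $w_3 = \alpha w_1 + \beta w_2$ and $w_4 = \gamma w_1 + \delta w_2$ and extract the coefficients from the remaining equations: $[w_1,w_3] = \beta d_{1,2} = 0$ gives $\beta = 0$, whence $w_3 = \alpha w_1$; then $[w_2,w_3] = -\alpha d_{1,2} = d_{2,3}$ gives $\alpha = -d_{2,3}/d_{1,2}$, and $[w_1,w_4] = \delta d_{1,2} = d_{1,4}$ gives $\delta = d_{1,4}/d_{1,2}$.

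Finally I would compute $d_{3,4} = [w_3,w_4] = [\alpha w_1,\ \gamma w_1 + \delta w_2] = \alpha\delta\,[w_1,w_2] = \alpha\delta\,d_{1,2} = -d_{2,3}d_{1,4}/d_{1,2}$, that is, $d_{1,2}d_{3,4} = -d_{1,4}d_{2,3}$, contradicting the defining inequality $d_{1,2}d_{3,4}\neq -d_{1,4}d_{2,3}$ of a defect of type $(4)_B$. There is no substantial obstacle here: the whole argument is elementary linear algebra in $F^2$, and the only point warranting a moment's care is the opening reduction, where one must verify that the order‑sensitive sign convention in the definition of $\simeq_{w}$ is compatible with the transport of labelings — which it is, by construction. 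It is also worth noting that the edge $e_{2,4}$ never enters the computation, consistent with its absence from $\Gamma_2$.
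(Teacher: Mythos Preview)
Your proof is correct and follows essentially the same route as the paper's: reduce to $\Gamma_2$, assume a consistent labeling, and use elementary $2\times 2$ determinant algebra to force the relation $d_{1,2}d_{3,4} = -d_{1,4}d_{2,3}$, contradicting the defect condition. The only cosmetic difference is that you organize the computation by expanding $w_3,w_4$ in the basis $\{w_1,w_2\}$ of $F^2$, whereas the paper works componentwise assuming $\beta_1\neq 0$ (with the case $\alpha_1\neq 0$ dismissed as analogous); your packaging has the mild advantage of avoiding that case split.
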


\begin{proof}
It is enough to prove the lemma for the graph $\Gamma_2$ in Figure \ref{defect (4)_B}. 
We assume a consistent labeling $v_k \mapsto (\alpha_k, \beta_k)$ on $\Gamma_2$. Since $d_{1,2} \neq 0$, it is clear that $(\alpha_1, \beta_1) \neq (0,0)$. Suppose $\beta_1 \neq 0$ (the case $\alpha_1 \neq 0$ will lead to the same conclusion).
This, together with the restriction $d_{1,3} =0$ yields
\begin{align*}
&0=\beta_2 d_{1,3}\\
&= \beta_2(\alpha_1\beta_3-\alpha_3\beta_1)\\
&=\beta_3(\alpha_1\beta_2-\alpha_2\beta_1)+\beta_1(\alpha_2\beta_3-\alpha_3\beta_2)\\
&=\beta_3 d_{1,2}+\beta_1 d_{2,3}
\end{align*}

Thus, $\beta_3\beta_1^{-1} = -d_{1,2}^{-1} d_{2,3}$. Similarly, $0=\beta_4 d_{1,3} = \beta_3 d_{1,4} -\beta_1 d_{3,4}$ and thus $\beta_3\beta_1^{-1} = d_{1,4}^{-1} d_{3,4}$. Equating the two values of $\beta_3\beta_1^{-1}$ we obtain $d_{1,2}d_{3,4} = -d_{1,4}d_{2,3}$. This shows that if $\Gamma$ has a defect of type $(4)_B$, then it does not admit a consistent labeling.
\end{proof}

{\bfseries Defect of type $(4)_C$}. A weighted graph $\Gamma$ is said to have a \emph{defect of type $(4)_C$} if it contains a weighted subgraph that is weighted isomorphic to the graph $\Gamma_3$ in Figure \ref{defect (4)_C}.

\begin{center}
\begin{figure}[h]
\begin{tikzpicture}
  [scale=0.8,auto=left,every node/.style={circle,fill=blue!20}]
  \node (n3) at (9,1)      {$v_3$};
  \node (n1) at (1,5)      {$v_1$};
  \node (n4) at (1,1)      {$v_4$};
  \node (n2) at (9,5)     {$v_2$}; 
  \draw (n1) -- node[midway, fill = none] {$\neq 0$} (n2);  
  \draw (n1) -- node[midway, left, fill = none] {$\neq 0$} (n4);
  \draw (n2) -- node[midway, right, fill = none] {$\neq 0$} (n3);  
  \draw (n3) -- node[midway, below, fill = none] {$\neq 0$} (n4);
  \draw (n1) -- node[pos = 0.3, above, yshift = -7 pt, fill = none] {$\neq 0$} (n3);  
  \draw (n2) -- node[pos = 0.7, below, yshift = 7 pt, fill = none] {$\neq 0$} (n4);  
  \foreach \from/\to in {n2/n3,n1/n3,n1/n4,n3/n4,n1/n2,n2/n4}
      \draw (\from) -- (\to);
\end{tikzpicture}
\caption{{\bfseries Defect $(4)_C$}: Graph $\Gamma_3$ with $d_{1,2}d_{1,3}d_{1,4}d_{2,3}d_{2,4}d_{3,4} \ne 0$ and $d_{1,2}d_{3,4} + d_{1,4}d_{2,3} \ne d_{1,3}d_{2,4}$.}\label{defect (4)_C}
\end{figure}
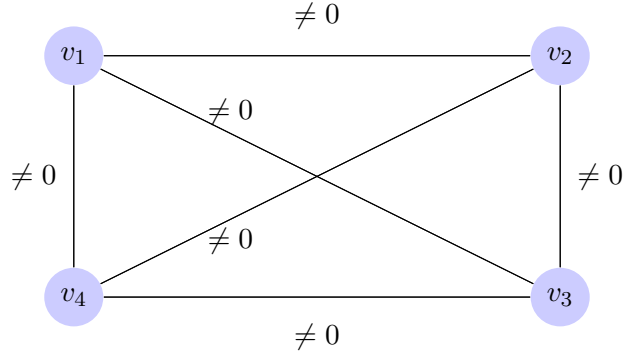
\end{center}

\begin{lemma}\label{defect (4)_C Lemma}
Let $\Gamma$ be a weighted graph with a defect of type $(4)_C$. Then $\Gamma$ does not admit a consistent labeling.
\end{lemma}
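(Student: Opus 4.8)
The plan is to reduce, as in the proof of Lemma \ref{defect (4)_B Lemma}, to the single graph $\Gamma_3$ of Figure \ref{defect (4)_C}, since a consistent labeling restricts to any weighted subgraph. So suppose $v_k \mapsto (\alpha_k, \beta_k)$ is a consistent labeling of $\Gamma_3$, so that $\alpha_i\beta_j - \alpha_j\beta_i = d_{i,j}$ for all six pairs $i<j$, and all six $d_{i,j}$ are nonzero. The goal is to force the relation $d_{1,2}d_{3,4} + d_{1,4}d_{2,3} = d_{1,3}d_{2,4}$, contradicting the defining inequality of the defect.

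The key computational identity I would use is the Plücker-type relation among $2\times 2$ minors: for the $2\times 4$ matrix with columns $(\alpha_k,\beta_k)$, writing $d_{i,j}$ for the minor on columns $i,j$, one has the quadratic Plücker relation
\begin{equation*}
d_{1,2}\, d_{3,4} - d_{1,3}\, d_{2,4} + d_{1,4}\, d_{2,3} = 0.
\end{equation*}
This is a classical identity (it is exactly the statement that the $2\times 2$ minors of a $2\times 4$ matrix satisfy the Grassmannian relation, and can be verified by direct expansion, or deduced from the fact that the four columns are vectors in a $2$-dimensional space hence any three are linearly dependent). Rearranging gives precisely $d_{1,2}d_{3,4} + d_{1,4}d_{2,3} = d_{1,3}d_{2,4}$, which is the forbidden equality. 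Hence no consistent labeling of $\Gamma_3$ can exist when $d_{1,2}d_{3,4} + d_{1,4}d_{2,3} \ne d_{1,3}d_{2,4}$, and since $\Gamma$ contains a weighted subgraph weighted isomorphic to $\Gamma_3$, the graph $\Gamma$ admits no consistent labeling either.

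If one prefers to avoid invoking the Plücker relation as a black box, the same conclusion can be reached by the elimination technique already used for Lemma \ref{defect (4)_B Lemma}: since $d_{1,2}\neq 0$, the label $(\alpha_1,\beta_1)$ is nonzero, say $\beta_1 \neq 0$; then from the three equations involving the pair $\{1,\cdot\}$ one solves for $\alpha_2\beta_1^{-1}, \alpha_3\beta_1^{-1}, \alpha_4\beta_1^{-1}$ and $\beta_2\beta_1^{-1},\beta_3\beta_1^{-1},\beta_4\beta_1^{-1}$ in terms of the $d_{1,j}$ (using the identity $\beta_j d_{1,k} - \beta_k d_{1,j} = \beta_1 d_{j,k}$ that appeared in the previous proof), and substituting into $d_{2,3}, d_{2,4}, d_{3,4}$ yields the stated relation after clearing denominators. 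The weighted-isomorphism reduction needs the same small observation as before, namely that reversing the order of two vertices negates the corresponding weight, which leaves all the products $d_{i,j}d_{k,l}$ appearing in the relation unchanged up to the overall sign that is common to both sides.

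The main obstacle is bookkeeping rather than conceptual: one must be careful that the weighted subgraph of $\Gamma$ is only \emph{weighted isomorphic} to $\Gamma_3$, so the vertex labels $1,2,3,4$ and the signs of the weights may be permuted; one should check that the expression $d_{1,2}d_{3,4} + d_{1,4}d_{2,3} - d_{1,3}d_{2,4}$ is, up to sign, invariant under the relevant relabelings, so that its vanishing is a well-defined property of the weighted subgraph. This is immediate from the antisymmetry $d_{i,j} = -d_{j,i}$ together with the fact that the Plücker expression is (up to sign) the Pfaffian-like combination that is alternating in the four indices; no genuine case analysis is required.
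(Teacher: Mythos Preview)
Your proof is correct and essentially matches the paper's: both arguments reduce to $\Gamma_3$ and show that any consistent labeling forces the Pl\"ucker relation $d_{1,2}d_{3,4} - d_{1,3}d_{2,4} + d_{1,4}d_{2,3} = 0$, contradicting the defect condition. The paper carries out by hand the elimination you sketch as your second alternative (deriving $\beta_1 d_{j,k} = \beta_j d_{1,k} - \beta_k d_{1,j}$ and substituting), whereas you more efficiently recognize the identity as the Grassmannian Pl\"ucker relation for $2\times2$ minors; your extra remark on invariance of the relation under weighted isomorphism is a nice addition that the paper leaves implicit.
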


\begin{proof}
It is enough to prove the lemma for the graph $\Gamma_3$ in Figure \ref{defect (4)_C}.
Let $v_k \mapsto (\alpha_k, \beta_k)$ be a consistent labeling on $\Gamma_3$. Since $d_{1,2} \neq 0$, it is clear that $(\alpha_1, \beta_1) \neq (0,0)$. Suppose $\beta_1 \neq 0$ (the case $\alpha_1 \neq 0$ will lead to the same conclusion). Then $\beta_1 d_{2,3} = \beta_1(\alpha_2\beta_3-\alpha_3\beta_2) = \beta_2d_{1,3}-\beta_3d_{1,2}$. 

Similarly, $\beta_1 d_{2,4} = \beta_2d_{1,4}-\beta_4d_{1,2}$ and $\beta_1 d_{3,4} = \beta_3d_{1,4}-\beta_4d_{1,3}$. 
Hence, 
\begin{align*}
\beta_1(d_{1,2}d_{3,4} + d_{2,3}d_{1,4}) &= d_{1,2}(\beta_1 d_{3,4})+d_{1,4}(\beta_1d_{2,3}) \\
&=d_{1,2}(\beta_3d_{1,4}-\beta_4d_{1,3})+d_{1,4}(\beta_2d_{1,3}-\beta_3d_{1,2})\\
&=-d_{1,2}\beta_4d_{1,3}+d_{1,4}\beta_2d_{1,3}\\
&=d_{1,3}(\beta_2d_{1,4}-\beta_4d_{1,2})\\
&=d_{1,3}\beta_1 d_{2,4} 
\end{align*}
Since $\beta_1 \ne 0$, we have $d_{1,2}d_{3,4} + d_{2,3}d_{1,4} = d_{1,3}d_{2,4}$.
This shows that if $\Gamma$ has a defect of type $(4)_C$, then it does not admit a consistent labeling.
\end{proof}

\section{Consistent labeling of graphs with four vertices}\label{Labeling of graphs with four vertices}
Recall that a weighted graph $\Gamma$ with at most $3$ vertices is either a tree or a triangle. Thus, by \cite[Lemma 3.1]{KK_2025(1)} and \cite[Lemma 3.7]{KK_2025(1)}, it admits a consistent labeling. 
To address the problem of consistent labeling, it is enough to consider connected weighted graphs without null vertices. We carry this assumption throughout this section, and examine graphs with four vertices for consistent labeling.

A weighted graph with four vertices is \emph{defective} if it has a defect of type $(4)_A$, $(4)_B$ or $(4)_C$.

\begin{theorem}\label{4 vertices classification}
Let $\Gamma$ be a weighted graph with four vertices. Then it does not admit a consistent labeling if and only if it is defective.
\end{theorem}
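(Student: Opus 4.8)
The statement to prove is Theorem \ref{4 vertices classification}: a four-vertex weighted graph (connected, no null vertices) admits no consistent labeling if and only if it is defective. One direction is already done: Lemmas \ref{defect (m)_A Lemma}, \ref{defect (4)_B Lemma}, \ref{defect (4)_C Lemma} together show that if $\Gamma$ has a defect of type $(4)_A$, $(4)_B$ or $(4)_C$, then it has no consistent labeling. So the real content is the converse: if $\Gamma$ is \emph{defectless}, then it admits a consistent labeling. The plan is to prove the contrapositive constructively --- assume $\Gamma$ has no consistent labeling and produce one of the three defects --- or, what is cleaner, to argue directly: assume $\Gamma$ is defectless and exhibit a consistent labeling by a case analysis on the isomorphism type of the underlying graph on four vertices.

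First I would reduce to finitely many cases. A connected simple graph on four vertices, up to isomorphism, is one of: a tree (path $P_4$ or star $K_{1,3}$), the $4$-cycle $C_4$, the "paw" ($C_3$ with a pendant edge), the "diamond" $K_4$ minus an edge, or $K_4$. Trees and triangles-plus-trees are handled by the cited results from \cite{KK_2025(1)} (a tree always admits a consistent labeling, and one can extend across a pendant edge freely since that edge imposes a single balance equation with two free parameters at the new vertex). So the substantive cases are $C_4$, the paw, the diamond, and $K_4$. In each case I would set up the balance equations $\alpha_i\beta_j - \alpha_j\beta_i = d_{i,j}$ and attempt to solve, using the freedom to first fix a labeling on a spanning tree. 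The key observation, already visible in the proofs of Lemmas \ref{defect (4)_B Lemma} and \ref{defect (4)_C Lemma}, is that on a spanning tree one has enough free parameters to realize any prescribed weights, and then each extra edge (a chord) imposes one scalar equation; the graph fails to be labelable exactly when these chord equations become inconsistent, and I must check that inconsistency forces precisely one of the listed defect patterns.

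Concretely, for $K_4$: choose $v_1$ with $(\alpha_1,\beta_1)$ and, since $\Gamma$ has no null vertex, one may normalize (after a weighted isomorphism / change of basis of $F^2$, which preserves all the $2\times 2$ determinants up to the determinant of the change of basis --- here I would use a unimodular change to preserve weights exactly) so that $v_1 \mapsto (1,0)$. Then $d_{1,j} = -\beta_j$, so $\beta_2,\beta_3,\beta_4$ are determined, and the remaining equations $\alpha_i\beta_j-\alpha_j\beta_i=d_{i,j}$ for $2\le i<j\le 4$ become \emph{linear} in $\alpha_2,\alpha_3,\alpha_4$. Analyzing this linear system: if some $\beta_j\neq 0$, say $\beta_2\neq 0$ (i.e.\ $d_{1,2}\neq 0$), one solves for $\alpha_3,\alpha_4$ in terms of $\alpha_2$ from the $(2,3)$ and $(2,4)$ equations, and substitutes into the $(3,4)$ equation; the condition for solvability is exactly an identity among the six weights, and the computation in the proof of Lemma \ref{defect (4)_C Lemma} shows that identity is $d_{1,2}d_{3,4}+d_{1,4}d_{2,3}=d_{1,3}d_{2,4}$ (when all six weights are nonzero, its failure is defect $(4)_C$). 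When some $d_{1,j}=0$ one lands, after relabeling, in the configurations governing defects $(4)_A$ and $(4)_B$; here I would do the bookkeeping: $\beta_j=0$ for that $j$ means $v_j$ behaves like $v_1$ on the segment $\{v_1,v_j\}$, and tracking which of the remaining weights must vanish or satisfy a product relation reproduces Figures \ref{defect (4)_A} and \ref{defect (4)_B}. The diamond, paw, and $C_4$ are sub-cases: they are $K_4$ with one, two, or two (non-adjacent) chords deleted, so fewer constraints, and the same analysis shows the only obstructions are the sub-patterns $(4)_A$ and $(4)_B$ that survive on those edge sets --- in particular the $(4)_C$ identity never gets \emph{forced} as an obstruction there because the missing edge frees up a parameter.

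The main obstacle I anticipate is the combinatorial bookkeeping of \emph{which} vertex plays the role of the distinguished vertex and \emph{which} edges are allowed to be zero: the defects are defined up to weighted isomorphism, so I must make sure that every way the linear system can fail --- over an arbitrary field, including characteristic $2$, where $-1=1$ and some of the sign distinctions in defect $(4)_A$ versus $(4)_B$ collapse --- is accounted for, and conversely that a defectless graph never triggers any of these failures. A secondary subtlety is the normalization step: replacing $v_1\mapsto(1,0)$ requires that $(\alpha_1,\beta_1)$ can be taken as the first vector of a basis, which is automatic, but I must only use $\mathrm{SL}_2(F)$ (or scaling by a single nonzero scalar absorbed into the target) transformations that preserve the determinants $d_{i,j}$ exactly rather than up to a scalar; alternatively I would carry the general $(\alpha_1,\beta_1)$ and split on $\beta_1\neq 0$ versus $\alpha_1\neq 0$ as the lemma proofs do. Once the case analysis is organized around "fix a spanning tree, then each chord is one linear equation," the verification in each of the four graph types is routine linear algebra, and assembling them yields the theorem.
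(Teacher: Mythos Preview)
Your proposal is correct and follows essentially the same approach as the paper: both reduce to the six connected four-vertex graphs, dispose of trees and the cycle via the results of \cite{KK_2025(1)}, and then for the paw, diamond, and $K_4$ verify case by case that the balance equations are solvable precisely when no defect is present. The only difference is cosmetic --- the paper records the solutions in explicit labeling tables (and for $K_4$ further subdivides by the number $|E_0|$ of zero-weight edges), whereas you propose to normalize $v_1\mapsto(1,0)$ and treat the remaining equations as a linear system in the $\alpha_j$.
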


\begin{proof}
If $\Gamma$ is defective, then by Lemmas \ref{defect (m)_A Lemma} --  \ref{defect (4)_C Lemma}, it does not admit a consistent labeling. We assume that $\Gamma$ is not defective and show that it admits a consistent labeling.

A connected graph in four vertices is isomorphic to one of the graphs in Figure \ref{4 vertices graphs table}. If $\Gamma$
is a tree, that is, $\Gamma \simeq \Gamma_5$ or $\Gamma_6$, then by Lemma 3.1 of \cite{KK_2025(1)}, it has a consistent labeling. Further, if $\Gamma$ is a cycle, that is, $\Gamma \simeq \Gamma_4$, then by Corollary $3.6$ of \cite{KK_2025(1)}, it admits a consistent labeling. It remains to address the cases when $\Gamma \simeq \Gamma_1, \Gamma_2$ or $\Gamma_3$.
Note that these are the cases where defects may arise.

\begin{figure}
\begin{tabular}{ccccc}
\begin{tikzpicture}
  [scale=0.45,auto=left,every node/.style={circle,fill=blue!20}]
  \node (n3) at (9,1)      {$v_3$};
  \node (n1) at (1,5)      {$v_1$};
  \node (n4) at (1,1)      {$v_4$};
  \node (n2) at (9,5)     {$v_2$}; 
 \foreach \from/\to in {n1/n2,n1/n3,n2/n3,n3/n4}
      \draw (\from) -- (\to);
\end{tikzpicture}
&
\hspace{0.5cm}
&
\begin{tikzpicture}
  [scale=0.45,auto=left,every node/.style={circle,fill=blue!20}]
  \node (n3) at (9,1)      {$v_3$};
  \node (n1) at (1,5)      {$v_1$};
  \node (n4) at (1,1)      {$v_4$};
  \node (n2) at (9,5)     {$v_2$}; 
  \foreach \from/\to in {n2/n3,n1/n3,n1/n4,n3/n4,n1/n2}
      \draw (\from) -- (\to);
      
\end{tikzpicture}
&
\hspace{0.5cm}
&
\begin{tikzpicture}
  [scale=0.45,auto=left,every node/.style={circle,fill=blue!20}]
  \node (n3) at (9,1)      {$v_3$};
  \node (n1) at (1,5)      {$v_1$};
  \node (n4) at (1,1)      {$v_4$};
  \node (n2) at (9,5)     {$v_2$}; 
  \foreach \from/\to in {n2/n3,n1/n3,n1/n4,n3/n4,n1/n2,n2/n4}
      \draw (\from) -- (\to);
\end{tikzpicture}\\

$\Gamma_1$ & \hspace{0.5cm} &  $\Gamma_2$ & \hspace{0.5cm} & $\Gamma_3$\\
\end{tabular}
\end{figure}

\begin{figure}
\begin{tabular}{ccccc}
\begin{tikzpicture}
  [scale=0.45,auto=left,every node/.style={circle,fill=blue!20}]
  \node (n3) at (9,1)      {$v_3$};
  \node (n1) at (1,5)      {$v_1$};
  \node (n4) at (1,1)      {$v_4$};
  \node (n2) at (9,5)     {$v_2$}; 
 \foreach \from/\to in {n2/n3,n1/n2,n1/n4,n3/n4}
      \draw (\from) -- (\to);
\end{tikzpicture}
&
\hspace{0.5cm}
&
\begin{tikzpicture}
  [scale=0.45,auto=left,every node/.style={circle,fill=blue!20}]
  \node (n3) at (9,1)      {$v_3$};
  \node (n1) at (1,5)      {$v_1$};
  \node (n4) at (1,1)      {$v_4$};
  \node (n2) at (9,5)     {$v_2$}; 
  \foreach \from/\to in {n2/n3,n3/n4,n1/n2}
      \draw (\from) -- (\to);
      
\end{tikzpicture}
&
\hspace{0.5cm}
&
\begin{tikzpicture}
  [scale=0.45,auto=left,every node/.style={circle,fill=blue!20}]
  \node (n3) at (9,1)      {$v_3$};
  \node (n1) at (1,5)      {$v_1$};
  \node (n4) at (1,1)      {$v_4$};
  \node (n2) at (9,5)     {$v_2$}; 
  \foreach \from/\to in {n1/n3,n1/n4,n1/n2}
      \draw (\from) -- (\to);
\end{tikzpicture}\\

$\Gamma_4$ & \hspace{0.5cm} &  $\Gamma_5$ & \hspace{0.5cm} & $\Gamma_6$\\
\end{tabular}
\caption{Connected graphs in $4$ vertices}
\label{4 vertices graphs table}
\end{figure}
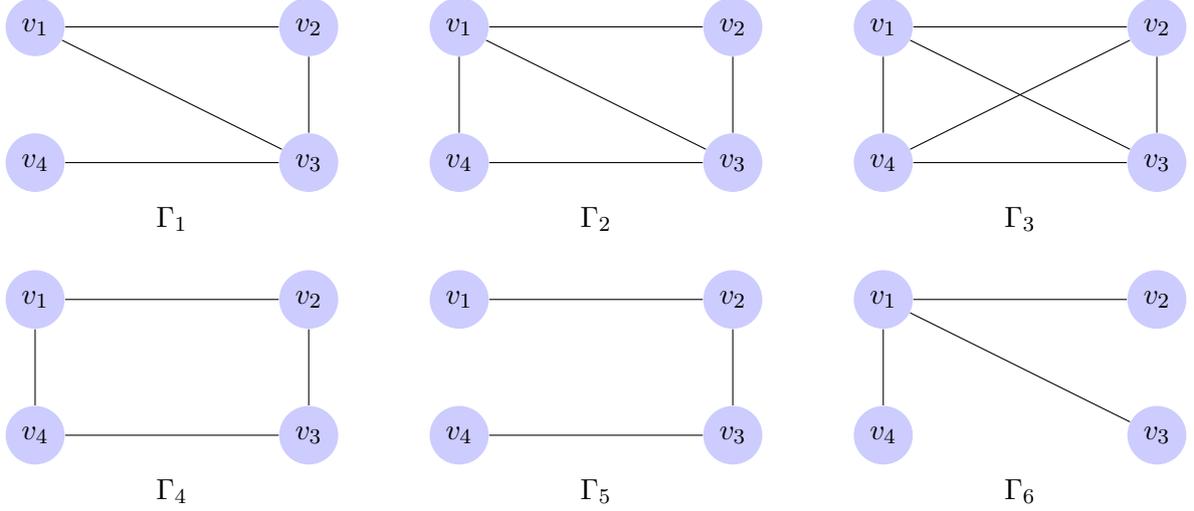

\noindent\textbf{Case 1.} \emph{$\Gamma \simeq \Gamma_1$}. \quad Since $\Gamma$ does not have null vertices, $d_{3,4} \neq 0$.

We claim that at least one of $d_{1,3}$ and $d_{2,3}$ is nonzero. Otherwise, if assume $d_{1,3} = 0$ and $d_{2,3} = 0$,
then we must have $d_{1,2} \neq 0$, for $v_1$ is not a null vertex. This exhibits a defect of type $(4)_A$ in $\Gamma$ and contradicts the assumption that $\Gamma$ is not defective.

Suppose, $d_{1,3} \neq 0$. The Table \ref{Case 4,1,1} provides a consistent labeling on $\Gamma$. The case when $d_{2,3} \neq 0$ can be addressed similarly to obtain a consistent labeling on $\Gamma$.

\begin{table}[h]
\parbox{.45\linewidth}{
\centering
\begin{tabular}{|c|c|c|}
 \hline
 $i$ &  $\alpha_i$ &$\beta_i$\\
 \hline
 $1$ & $d_{1,3}\beta_3^{-1}$ & $\beta_1 \in F$\\
 \hline
 $2$ & $d_{2,3}\beta_3^{-1}$ & $(d_{1,2}+d_{2,3}\beta_3^{-1}\beta_1)d_{1,3}^{-1}\beta_3$\\
 \hline
$3$ & $0$ & $\beta_3 \in F^{\times}$\\
 \hline
$4$ & $-d_{3,4}\beta_3^{-1}$ & $\beta_4 \in F$\\
 \hline
\end{tabular}
\caption{}
\label{Case 4,1,1}
}
\hfill
\parbox{.45\linewidth}{
\centering
 \begin{tabular}{|c|c|c|}
 \hline
 $i$ &  $\alpha_i$ &$\beta_i$\\
 \hline
 $1$ & $d_{1,4}\beta_4^{-1}$ & $0$\\
 \hline
 $2$ & $0$ & $d_{1,2}d_{1,4}^{-1}\beta_4$\\
 \hline
$3$ & $d_{3,4}\beta_4^{-1}$ & $0$\\
 \hline
$4$ & $\alpha_4 \in F$ & $\beta_4 \in F^{\times}$\\
 \hline
\end{tabular}
\caption{}
\label{Case 4,2,1}
}
\end{table}

\noindent\textbf{Case 2.} \emph{$\Gamma \simeq \Gamma_2$}. \quad If $d_{1,3} \ne 0$, then Table \ref{Case 4,1,1}
provides a consistent labeling $v_k \mapsto (\alpha_k, \beta_k)$ on $\Gamma$, with $\beta_4 = (d_{1,4} + d_{3,4}\beta_1\beta_3^{-1})d_{1,3}^{-1}\beta_3$. Thus, we assume $d_{1,3} = 0$.

We claim that $d_{1,2} \neq 0$. Otherwise, if $d_{1,2} =0$, then $d_{2,3} \neq 0$, for $v_2$ is not a null vertex. Similarly, since $v_1$ is not a null vertex, $d_{1,4} \neq 0$. Thus, we have $d_{1,3} = 0$, $d_{1,2} =0$, $d_{2,3} \neq 0$ and $d_{1,4} \neq 0$. This exhibits a defect of type $(4)_A$ in $\Gamma$ with respect to the bad cycle $v_1 \to v_2 \to v_3 \to v_1$, and contradicts the assumption that $\Gamma$ is not defective. Thus, $d_{1,2} \neq 0$.

By a similar argument, remaining weights $d_{2,3}$, $d_{1,4}$ and $d_{3,4}$ are necessarily nonzero. To avoid a defect of type $(4)_B$, we must assume $d_{1,2}d_{3,4} = -d_{2,3}d_{1,4}$. In this case, then Table \ref{Case 4,2,1} provides a consistent labeling on $\Gamma$.

\begin{table}[h]
\parbox{.45\linewidth}{
\centering
\begin{tabular}{|c|c|c|}
 \hline
 $i$ &  $\alpha_i$ &$\beta_i$\\
 \hline
 $1$ & $0$ & $\beta_1 \in F^{\times}$\\
 \hline
 $2$ & $-d_{1,2}\beta_1^{-1}$ & $d_{2,4}d_{1,4}^{-1}\beta_1$\\
 \hline
$3$ & $-d_{1,3}\beta_1^{-1}$ & $d_{3,4}d_{1,4}^{-1}\beta_1$\\
 \hline
$4$ & $-d_{1,4}\beta_1^{-1}$ & $0$\\
 \hline
\end{tabular} 
\caption{}
\label{Case 4,3,1}
}
\hfill
\parbox{.45\linewidth}{
\centering
\begin{tabular}{|c|c|c|}
 \hline
 $i$ &  $\alpha_i$ &$\beta_i$\\
 \hline
 $1$ & $0$ & $\beta_1 \in F^{\times}$\\
 \hline
 $2$ & $0$ & $d_{2,3}d_{1,3}^{-1}\beta_1$\\
 \hline
$3$ & $-d_{1,3}\beta_1^{-1}$ & $\beta_1$\\
 \hline
$4$ & $0$ & $-d_{3,4}d_{1,3}^{-1}\beta_1$\\
 \hline
\end{tabular}
\caption{}
\label{Case 4,3,2}
}
\end{table}

\noindent\textbf{Case 3}: \emph{$\Gamma \simeq \Gamma_3$}. \quad Let $|E_0|$ denote the
number of edges in $\Gamma$ whose weight is $0$. Our subcases depend on $|E_0|$.

\noindent{(a).} $|E_0| = 0$. \quad  To avoid a defect of type $(4)_C$, the weights must satisfy the relation
$$d_{1,2}d_{3,4} + d_{2,3}d_{1,4} = d_{1,3}d_{2,4}.$$

Under this condition, the Table \ref{Case 4,3,1} provides a consistent labeling of $\Gamma$.

\noindent{(b).} $|E_0| = 1$. \quad Since $\Gamma$ is a complete graph, without loss of generality, we assume that $d_{1,3}= 0$. Now, to avoid a defect of type $(4)_B$, the weights must satisfy the relation
$$d_{1,2}d_{3,4} + d_{2,3}d_{1,4} = 0.$$

Under this condition, the Table \ref{Case 4,2,1} provides a consistent labeling of $\Gamma$, with $\alpha_4 = -d_{2,4}d_{1,2}^{-1}d_{1,4}{\beta_4}^{-1}$.

\noindent{(c).} $|E_0| = 2$. \quad We claim that the two edges with zero weight do not share a common vertex. 
Suppose, to the contrary, that both edges with zero weight are incident to a common vertex. Without loss of generality, assume
$d_{1,2} = d_{1,3} = 0$ and $d_{1,4}d_{2,3}d_{2,4}d_{3,4}\neq 0$. 
This exhibits a bad cycle $v_1 \to v_2 \to v_3 \to v_1$, and a defect of type $(4)_A$ in $\Gamma$, contradicting that
$\Gamma$ is defectless.

Therefore, the two zero weight edges are disjoint, say $d_{1,3}=d_{2,4}=0$ and $d_{1,2}d_{1,4}d_{2,3}d_{3,4}\neq 0$.
Now, to avoid a defect of type $(4)_B$, we must have $d_{1,2}d_{3,4} + d_{2,3}d_{1,4} = 0$.
With this restriction on weights, the Table \ref{Case 4,2,1} provides a consistent labeling for $\Gamma$, 
with $\alpha_4 = 0$.

\noindent{(d).} $|E_0| = 3$. \quad Since $\Gamma$ does not have a null vertex, all three edges with zero weight do not share a common vertex. Therefore, up to weighted isomorphism, we assume that either

\begin{enumerate}
\item[(i).] $d_{1,2}=d_{2,3}=d_{3,4}=0$ and $d_{1,3}d_{1,4}d_{2,4} \neq 0$, or
\item[(ii).] $d_{1,2}=d_{2,4}=d_{1,4}=0$ and $d_{1,3}d_{2,3}d_{3,4} \neq 0$. 
\end{enumerate}
The case (i) leads to a defect of type $(4)_A$ in $\Gamma$, with respect to the bad cycle $v_1 \to v_2 \to v_3 \to v_1$. Hence, this case does not occur.

In the case (ii), the Table \ref{Case 4,3,2} provides a consistent labeling for $\Gamma$.

\noindent{(e).} $|E_0| = 4$. \quad In this case, up to weighted isomorphism, we have
$d_{1,2}=d_{2,3}=d_{3,4}=d_{1,4}=0$ and $d_{1,3}d_{2,4} \neq 0$. This leads to a defect of type $(4)_A$ in $\Gamma$, with respect to the bad cycle $v_1 \to v_2 \to v_3 \to v_1$. Hence, this case does not occur.

\noindent{(f).} $|E_0| > 4$. \quad This case cannot occur because the condition $|E_0| > 4$ imposes a null vertex in $\Gamma$. However, by our initial assumption, the graph $\Gamma$ does not contain a null vertex.
\end{proof}

Thus, we have examined the problem of consistent labeling for weighted graphs with four vertices. Extending this analysis to graphs with more vertices requires a formulation of more defects. The weighted graph in Figure \ref{Figure (5)_B} with five vertices does not have a consistent labeling. At the same time, it does not have any of the defects that we have discussed.

In a forthcoming work, we classify all defects in graphs with five vertices.

\begin{figure}[h]
\begin{tikzpicture}
  [scale=0.6,auto=left,every node/.style={circle,fill=blue!20}]
  \node (n3) at (9,1)      {$v_3$};
  \node (n1) at (1,5)      {$v_1$};
  \node (n4) at (1,1)      {$v_4$};
  \node (n2) at (9,5)      {$v_2$}; 
  \node (n5) at (5,9)      {$v_5$}; 
  
  \foreach \from/\to in {n1/n2,n1/n4,n1/n5,n2/n3,n2/n5,n3/n4}
      \draw (\from) -- (\to);
\end{tikzpicture}
\caption{Graph $\Gamma$ with $d_{1,5}d_{2,5}d_{3,4}\ne 0$ and $d_{1,2} = d_{1,4} = d_{2,3} = 0$.}\label{Figure (5)_B}
\end{figure}
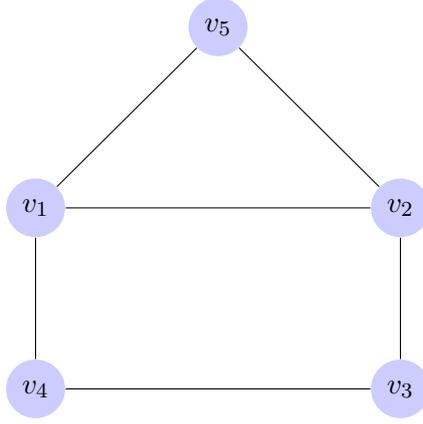



\section{Surjectivity of bilinear maps and Lie bracket}\label{section on bilinear maps and Lie algebra}

Let $F$ be a field and $U$ be a vector space over $F$ whose dimension is at most countable. Let $B: U \times U \to W$ be an alternating bilinear map such that $W$ is the subspace spanned by the set $B(U \times U)$. In this section, we examine the conditions when $W = B(U \times U)$. Our results on labeling of graphs proved in \S\ref{Labeling of graphs with four vertices} play a crucial role in this analysis.

Let $U^\perp := \{v \in U : B(v, v') =0, \text{ for all } \ v' \in U\}$ be the orthogonal complement of $U$ and $\mathcal B_1:= \{v_1, v_2, \dots \}$ be a basis of $U^{\perp}$. We extend $\mathcal B_1$ to a basis $\mathcal{B}:= \mathcal B_1 \cup \mathcal B_2$ of $U$. Let $\mathcal B_2 = \{u_1, u_2, \dots\}$. Then
$$W = {\rm span}(\{B(u_i, u_j) : u_i, u_j \in \mathcal{B}_2, i < j\}).$$ 
Evidently, we can write an arbitrary $w \in W$ as $$w = \sum_{1 \leq i < j \leq r} d_{i,j} B(u_i, u_j); \quad d_{i,j} \in F.$$ 
It's worth noting that the set $\{B(u_i, u_j) : u_i, u_j \in \mathcal{B}_2\}$ need not be a basis of $W$. Thus, for any $w \in W$, there may exist multiple choices for the scalers $d_{i,j}$. 
Let 
$$\mathcal{I} := \{i : d_{i,j}\ne 0 \text{ for some $j$}\} \cup \{j : d_{i,j}\ne 0 \text{ for some $i$}\}$$

Let $|\mathcal{I}| = n$. We permute the indices $\{1,2, \cdots, \}$ so that $\mathcal{I} = \{1, 2, \cdots , n\}$. Now, we consider the weighted graph $\Gamma$ with vertex set $\{v_1, v_2, \dots, v_n\}$, where two vertices
$v_i$ and $v_j$ are adjacent if $B(u_i, u_j) \neq 0$. On the connecting edge $e_{i,j}$, we assign the weight  $d_{i,j}$.

The weighted graph $\Gamma$ with the weight sequence $D = (d_{i,j})_{e_{i,j} \in E}$ depends on the set $\mathcal{B}_2$, the element $w$, and the permutation that sorts out elements of $\mathcal I$ as first $n$ indices. A weight sequence obtained through this process is called a \emph{presentation} of $w$. The weighted graph $\Gamma$ corresponds to a system of balance equations and it is clear that $w$ lies in the image of $B$ if and only if there exists a presentation $D = (d_{i,j})_{e_{i,j} \in E}$ of $w$ such that the corresponding graph $\Gamma$ has a consistent labeling. We refer to \cite[\S5]{KK_2025(1)} for more details.

The following proposition is a direct consequence of Theorem \ref{4 vertices classification}.

\begin{proposition}\label{Elementwise Bilinear map without defects proposition}
Let $F$ be a field and $U, W$ be two vector spaces over $F$ with at most countable dimension. Let $B: U \times U \to W$ be an alternating bilinear map over $F$ such that ${\rm span}(B(U \times U)) = W$. Let $w \in W$ be such that for some presentation $D$ of $w$, the corresponding graph $\Gamma$ has $4$ vertices and is defectless. Then $w \in B(U \times U)$.
\end{proposition}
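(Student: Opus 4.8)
The statement is essentially a translation device: it says that the combinatorial fact established in Theorem \ref{4 vertices classification} (defectless four-vertex weighted graphs admit consistent labelings) transfers to the bilinear-map setting. The plan is therefore to unwind the definitions carefully and reduce to that theorem. First I would fix a presentation $D = (d_{i,j})_{e_{i,j}\in E}$ of $w$ for which the associated weighted graph $\Gamma$ has four vertices and is defectless. By the discussion preceding the proposition, $w = \sum_{1\le i<j\le n} d_{i,j} B(u_i,u_j)$ for suitable basis vectors $u_1,\dots,u_n \in \mathcal{B}_2$, and, after the relevant permutation, the index set $\mathcal{I}$ equals $\{1,2,3,4\}$; vertices $v_i,v_j$ of $\Gamma$ are joined precisely when $B(u_i,u_j)\ne 0$, in which case the edge $e_{i,j}$ carries weight $d_{i,j}$.

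Next I would invoke Theorem \ref{4 vertices classification}: since $\Gamma$ is a weighted graph with four vertices that is defectless (i.e.\ not defective), it admits a consistent labeling $v_k \mapsto (a_k,b_k)\in F\times F$, meaning $a_i b_j - a_j b_i = d_{i,j}$ for every edge $e_{i,j}\in E$. One should note the mild caveat in \S\ref{Labeling of graphs with four vertices} that the consistent-labeling analysis was carried out for connected weighted graphs without null vertices; so I would first dispense with those degenerate cases (a null vertex $v_k$ contributes nothing and may be dropped, reducing $n$; disconnected $\Gamma$ can be handled componentwise, or by observing that $w$ then splits as a sum of elements with fewer relevant indices, each of which is in $B(U\times U)$ by the three-vertices-or-fewer results of \cite{KK_2025(1)}, and the images recombine since the supporting basis vectors are linearly independent). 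With a consistent labeling in hand, set
\[
x := \sum_{k=1}^{n} a_k u_k, \qquad y := \sum_{k=1}^{n} b_k u_k \ \in U.
\]
Then by bilinearity and alternation of $B$,
\[
B(x,y) = \sum_{i<j} (a_i b_j - a_j b_i)\, B(u_i,u_j) = \sum_{i<j} d_{i,j}\, B(u_i,u_j) = w,
\]
where the sum may be taken over all $i<j$ in $\{1,\dots,n\}$: whenever $e_{i,j}\notin E$ we have $B(u_i,u_j)=0$, so that term vanishes regardless of the labeling, and for $e_{i,j}\in E$ the consistency condition gives exactly the coefficient $d_{i,j}$. Hence $w \in B(U\times U)$.

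I do not expect a genuine obstacle here — the content is entirely in Theorem \ref{4 vertices classification} and in the setup of presentations, both of which are available to us. The only points requiring care are the bookkeeping ones: matching the ordering/permutation conventions so that the labeling's equation $a_i b_j - a_j b_i = d_{i,j}$ has the correct sign (this is exactly the convention built into the definition of consistent labeling and weighted isomorphism), and handling the normalizing assumptions (no null vertices, connectedness) under which Theorem \ref{4 vertices classification} was stated, so that the reduction to it is legitimate. Once those are addressed, the passage from labeling to the pair $(x,y)$ with $B(x,y)=w$ is the short bilinear computation displayed above.
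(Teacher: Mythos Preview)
Your proposal is correct and matches the paper's approach: the paper simply states that this proposition is a direct consequence of Theorem \ref{4 vertices classification}, and your unwinding of the definitions (obtain a consistent labeling, form $x=\sum a_k u_k$ and $y=\sum b_k u_k$, and verify $B(x,y)=w$ by bilinearity) is precisely the intended passage. Your attention to the null-vertex and connectedness normalizations is appropriate bookkeeping that the paper leaves implicit.
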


The following proposition follows directly from Lemmas \ref{defect (m)_A Lemma} -- \ref{defect (4)_C Lemma}.

\begin{proposition}\label{Elementwise Bilinear map defects proposition}
Let $F$ be a field and $U, W$ be two vector spaces over $F$ with at most countable dimension. Let $B: U \times U \to W$ be an alternating bilinear map over $F$ such that ${\rm span}(B(U \times U)) = W$. Let $w \in W$ be such that for each presentation $D$ of $w$, the corresponding weighted graph $\Gamma$ contains a defect of type $(m)_A$, $(4)_B$ or $(4)_C$. Then $w \notin B(U \times U)$.
\end{proposition}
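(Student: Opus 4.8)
The statement to prove is Proposition \ref{Elementwise Bilinear map defects proposition}: if for \emph{every} presentation $D$ of $w$, the corresponding weighted graph $\Gamma$ has a defect of type $(m)_A$, $(4)_B$ or $(4)_C$, then $w \notin B(U\times U)$.

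Plan:

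\textbf{Setup.} Recall from the construction preceding the proposition that $w \in B(U\times U)$ if and only if there exists a presentation $D = (d_{i,j})_{e_{i,j}\in E}$ of $w$ whose associated weighted graph $\Gamma$ admits a consistent labeling. So to prove $w \notin B(U\times U)$, it suffices to show that \emph{no} presentation of $w$ gives a graph with a consistent labeling.

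\textbf{Main argument.} I would argue by contradiction: suppose $w \in B(U\times U)$. Then there is a presentation $D$ of $w$ whose weighted graph $\Gamma$ admits a consistent labeling $v_k \mapsto (a_k,b_k)$. By hypothesis, this particular $\Gamma$ contains a defect of type $(m)_A$, $(4)_B$, or $(4)_C$. But a consistent labeling of $\Gamma$ restricts to a consistent labeling of every weighted subgraph of $\Gamma$ (as remarked in \S\ref{Defects in graphs having 4 vertices}), and in particular to the weighted subgraph exhibiting the defect. By Lemma \ref{defect (m)_A Lemma} (for type $(m)_A$), Lemma \ref{defect (4)_B Lemma} (for type $(4)_B$), or Lemma \ref{defect (4)_C Lemma} (for type $(4)_C$), a weighted graph carrying such a defect admits no consistent labeling — contradicting the existence of the consistent labeling on $\Gamma$. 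Hence no presentation of $w$ yields a consistently labelable graph, so $w \notin B(U\times U)$.

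\textbf{Remarks on difficulty.} This proposition is essentially a bookkeeping corollary: all the real work is in Lemmas \ref{defect (m)_A Lemma}--\ref{defect (4)_C Lemma} and in the earlier established equivalence ``$w\in B(U\times U)$ $\iff$ some presentation has a consistently labelable graph.'' The only point requiring a line of care is the passage from a consistent labeling of $\Gamma$ to one of the defect-witnessing subgraph; this is exactly the ``subgraph restriction'' observation already recorded in the text, so there is no genuine obstacle. One could also phrase the proof without contradiction: for each presentation $D$, the graph $\Gamma$ has a defect, hence (by the cited lemmas plus the subgraph remark) has no consistent labeling; since this holds for \emph{all} presentations, the characterization forces $w\notin B(U\times U)$.
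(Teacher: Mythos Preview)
Your proof is correct and matches the paper's approach exactly: the paper simply states that the proposition ``follows directly from Lemmas \ref{defect (m)_A Lemma}--\ref{defect (4)_C Lemma},'' and your write-up just unpacks this, using the equivalence between $w\in B(U\times U)$ and the existence of a consistently labelable presentation together with the subgraph-restriction remark. There is nothing to add.
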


The above propositions have an interesting Corollary.
\begin{corollary}\label{Whole Space Bilinear map defects proposition}
Let $F$ be a field and $U, W$ be two vector spaces over $F$ with at most countable dimension. Let $B: U \times U \to W$ be an alternating bilinear map over $F$ such that $\dim(U / U^{\perp})$ is $4$ and ${\rm span}(B(U \times U)) = W$. Let $w \in W$. Then $w \in B(U \times U)$ if and only if there exists a presentation $D$ of $w$ such that the corresponding weighted graph $\Gamma$ is free from defects of type $(4)_A$, $(4)_B$ and $(4)_C$.
\end{corollary}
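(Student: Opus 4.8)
The plan is to deduce Corollary~\ref{Whole Space Bilinear map defects proposition} directly from Propositions~\ref{Elementwise Bilinear map without defects proposition} and~\ref{Elementwise Bilinear map defects proposition}, after checking that the hypothesis $\dim(U/U^{\perp}) = 4$ forces every presentation of $w$ to live on a graph with at most four vertices. First I would recall the construction of a presentation from \S\ref{section on bilinear maps and Lie algebra}: one fixes a basis $\mathcal{B} = \mathcal{B}_1 \cup \mathcal{B}_2$ of $U$ with $\mathcal{B}_1$ a basis of $U^{\perp}$, writes $w = \sum_{i<j} d_{i,j} B(u_i, u_j)$ with $u_i \in \mathcal{B}_2$, and forms the weighted graph $\Gamma$ on the index set $\mathcal{I}$ of indices actually occurring with a nonzero coefficient. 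Since $\mathcal{B}_2$ projects to a basis of $U/U^{\perp}$, we have $|\mathcal{B}_2| = \dim(U/U^{\perp}) = 4$, so $\mathcal{I} \subseteq \{1,2,3,4\}$ and hence \emph{every} presentation $D$ of $w$ yields a weighted graph $\Gamma$ with at most four vertices. (Graphs with fewer than four vertices are trees or triangles and always admit consistent labelings, and they carry no defect of type $(4)_A$, $(4)_B$ or $(4)_C$, so they are covered by the ``only if'' direction below without incident.)

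With this observation in hand, the two implications are immediate. For the ``if'' direction: if some presentation $D$ of $w$ has $\Gamma$ free from defects of type $(4)_A$, $(4)_B$ and $(4)_C$, then since $\Gamma$ has at most four vertices and a graph on at most four vertices can only carry a defect of type $(m)_A$ with $m = 4$, the graph $\Gamma$ is in fact defectless in the sense of \S\ref{Defects in graphs having 4 vertices}; Proposition~\ref{Elementwise Bilinear map without defects proposition} then gives $w \in B(U \times U)$. For the ``only if'' direction I would argue contrapositively: if \emph{no} presentation of $w$ has a defect-free graph, then since every presentation lives on at most four vertices, each corresponding $\Gamma$ must contain a defect of type $(4)_A$, $(4)_B$ or $(4)_C$ — in particular a defect of type $(m)_A$, $(4)_B$ or $(4)_C$ — so Proposition~\ref{Elementwise Bilinear map defects proposition} yields $w \notin B(U \times U)$. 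Equivalently, if $w \in B(U \times U)$ then by Proposition~\ref{Elementwise Bilinear map defects proposition} there is at least one presentation whose graph carries no defect of type $(m)_A$, $(4)_B$ or $(4)_C$, hence (again using the at-most-four-vertices bound) no defect of type $(4)_A$, $(4)_B$ or $(4)_C$.

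The only genuinely substantive step is the bookkeeping in the first paragraph: verifying that $\mathcal{B}_2$ has exactly four elements and that the index set $\mathcal{I}$ that indexes the vertices of $\Gamma$ is therefore contained in a four-element set, regardless of which basis $\mathcal{B}_2$, which expansion of $w$, and which sorting permutation were chosen. I do not expect any real obstacle here — it is a direct unwinding of the definitions — but one should be slightly careful to phrase it so that it applies uniformly to \emph{all} presentations (the quantifier structure of the corollary is ``there exists a presentation'' on one side, matching ``for some presentation'' in Proposition~\ref{Elementwise Bilinear map without defects proposition}, and its negation ``for each presentation'' matching Proposition~\ref{Elementwise Bilinear map defects proposition}). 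Once that is pinned down, the corollary is a formal consequence of the two propositions.
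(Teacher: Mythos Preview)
Your proposal is correct and matches the paper's intended approach: the paper presents this result as an immediate corollary of Propositions~\ref{Elementwise Bilinear map without defects proposition} and~\ref{Elementwise Bilinear map defects proposition} without further argument, and your write-up supplies exactly the bookkeeping (that $|\mathcal{B}_2| = 4$ forces every presentation to have at most four vertices, so the only possible $(m)_A$ defect is $(4)_A$) needed to justify the deduction. Your handling of the edge case where $\Gamma$ has fewer than four vertices is also appropriate.
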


The following proposition is an interesting application of our results on labeling for graphs having $4$ vertices.

\begin{proposition}\label{Elememtwise Bilinear map dim 3}
Let $F$ be a field and $U, W$ be two vector spaces over $F$ with at most countable dimension. Let $B: U \times U \to W$ be an alternating bilinear map over $F$ such that ${\rm span}(B(U \times U)) = W$. Let $\mathcal B : =\{u_1, u_2, \cdots\}$ be an ordered basis of $U /U^{\perp}$. Let $\mathcal B_2$ be a finite subset of $\mathcal B$, and $W' := {\rm span}(B(\mathcal B_2 \times \mathcal B_2 ))$. If $\dim(W') \leq 3$, then $W' \subseteq B({\rm span}(\mathcal B_2) \times {\rm span}(\mathcal B_2))$.
\end{proposition}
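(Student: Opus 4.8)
The plan is to reduce the statement to a finite-dimensional, in fact $4$-dimensional, core and then settle that core using the geometry of the Pl\"ucker quadric.

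First I would restate the claim in purely bilinear terms. Put $V:=\operatorname{span}(\mathcal B_2)$, a finite-dimensional space; by bilinearity $\operatorname{span}(B(V\times V))=\operatorname{span}(B(\mathcal B_2\times\mathcal B_2))=W'$, so the assertion is exactly that the alternating map $B|_V\colon V\times V\to W'$ is \emph{onto} $W'$ (the inclusion $B(V\times V)\subseteq W'$ is automatic). Two cases are immediate. If $\dim W'\le 1$, any nonzero value $B(x,y)$ spans $W'$ and, by scaling $y$, every element of $W'$ is a value. If $\dim V\le 3$ then $\dim(\wedge^2V)\le 3$ and every $2$-vector in $\wedge^2 V$ is decomposable; writing $\phi\colon\wedge^2V\to W'$ for the surjection with $\phi(x\wedge y)=B(x,y)$, one gets $B(V\times V)=\phi(\{\text{decomposables}\})=\phi(\wedge^2V)=W'$. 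So from now on assume $\dim W'\in\{2,3\}$ and $\dim V\ge 4$.

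The central reduction is: there is a $4$-dimensional subspace $X\subseteq V$ with $\operatorname{span}(B(X\times X))=W'$; once this is in hand it suffices to prove the statement for $X$ in place of $V$, since $B(X\times X)\subseteq B(V\times V)$. Choosing a basis $w_1,\dots,w_d$ of $W'$ with each $w_k=B(p_k,q_k)$ a genuine value of $B$ gives $X_0:=\operatorname{span}(p_1,q_1,\dots,p_d,q_d)$ with $\operatorname{span}(B(X_0\times X_0))=W'$ and $\dim X_0\le 2d\le 6$; the work is to trim this to dimension $4$. The clean situation is when some $g\in V$ has $B(g,\cdot)\colon V\to W'$ surjective: then every $w\in W'$ equals $B(g,v)$ for suitable $v$ and we are done outright, and in any case $X:=\operatorname{span}(g,x_1,\dots,x_d)$ with $\{B(g,x_k)\}$ a basis of $W'$ has dimension $\le d+1\le 4$. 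The remaining configurations are the degenerate ones in which \emph{every} $B(g,\cdot)$ has rank $\le 2$; these I would analyze directly via the pencil $\{a_1\omega_1+\cdots+a_d\omega_d\}$ of alternating forms on $V$ dual to $W'$ (equivalently, via the linear dependencies among the $B(u_i,u_j)$), extracting a $4$-dimensional $X$ by hand. I expect this trimming to be the main obstacle: over an infinite field a generic $4$-dimensional $X$ works by a dimension count, but securing one uniformly over every field, especially small finite fields, is the delicate point.

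It remains to treat $\dim V=4$. Here $\wedge^2V\cong F^6$ carries the Pl\"ucker quadratic form $\mathrm{Pl}$, whose zero locus is precisely the set of decomposable $2$-vectors; over any field (char.\ $2$ included) $\mathrm{Pl}$ is nondegenerate and hyperbolic, of Witt index $3$. Fix $w\in W'\setminus\{0\}$ (the case $w=0$ is trivial) and pick $\eta_0$ with $\phi(\eta_0)=w$; set $L:=F\eta_0+\ker\phi$, of dimension $7-\dim W'$ and hence of codimension $\dim W'-1\le 2$ in $\wedge^2V$. Since restriction to a codimension-$c$ subspace lowers Witt index by at most $c$, the form $\mathrm{Pl}|_L$ has Witt index $\ge 3-2=1$; consequently its isotropic vectors span all of $L$, and because $\ker\phi$ is a hyperplane in $L$ they cannot all lie in $\ker\phi$. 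Choose an isotropic $\eta\in L\setminus\ker\phi$ and rescale so that $\phi(\eta)=w$ (legitimate, as $\phi(L)=Fw$ and $\phi(\eta)\ne 0$); then $\eta$ is decomposable, $\eta=x\wedge y$ with $x,y\in V$, and $B(x,y)=\phi(\eta)=w$, so $w\in B(X\times X)\subseteq B(V\times V)$. (Equivalently, in the language of \S\ref{section on bilinear maps and Lie algebra} this says some presentation of $w$ on a $4$-vertex graph is defectless, so Proposition~\ref{Elementwise Bilinear map without defects proposition} applies.) I would finally remark that the argument collapses exactly at $\dim W'=4$: then $L$ has codimension $3$, the Witt-index estimate gives nothing, and for $\dim V\ge 5$ the decomposable locus ceases to be a single quadric — consistent with the counterexamples at $\dim(L')=4$.
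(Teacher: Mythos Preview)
Your approach via the Pl\"ucker quadric is genuinely different from the paper's, and would be more conceptual if completed: the paper never reduces to a fixed $4$-dimensional subspace but instead runs a long case analysis on presentations with $4$, $5$, or $6$ vertices (Case~3(a),(b),(c) of its proof), repeatedly swapping one basis element $B(u_i,u_j)$ of $W'$ for another until the associated weighted graph becomes a tree, a cycle, or a defectless $4$-vertex graph, and then invokes Theorem~\ref{4 vertices classification}. However, your argument has two real gaps.

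The first you already flag: the trimming of $X_0$ to a $4$-dimensional $X$ with $\operatorname{span}(B(X\times X))=W'$, in the degenerate regime where every $B(g,\cdot)$ has rank $\le 2$, is not carried out, and it cannot be borrowed from the paper since the paper simply does not attempt this reduction. The second gap is more hidden. The step ``$\mathrm{Pl}|_L$ has Witt index $\ge 1$; consequently its isotropic vectors span all of $L$'' is false as a general principle about quadratic forms: for example $q(a,b,c,d)=b^2+c^2+d^2$ on $\mathbb R^4$ has a one-dimensional maximal totally isotropic subspace (its radical), yet its isotropic vectors span only that line, so a priori they could all lie in the hyperplane $\ker\phi$. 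The conclusion \emph{is} correct in your specific setting, but for a structural reason you do not give: if $R=\operatorname{rad}(b_Q|_L)$ is one-dimensional, the induced form $\bar Q$ on $L/R$ is the orthogonal complement of an anisotropic vector inside a $4$-dimensional hyperbolic space and hence has Witt index $1$ by Witt cancellation; if $\dim R=2$, then $R=L^{\perp}$ and any maximal totally isotropic subspace of $\wedge^2V$ containing $R$ lies inside $L$, giving a $3$-dimensional totally isotropic subspace of $L$. In either case one obtains an isotropic vector outside $R$, and only then does the standard hyperbolic-plane argument show the isotropics span. Without this additional analysis, the inference ``they cannot all lie in $\ker\phi$'' is unjustified.
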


\begin{proof}
We proceed case by case depending on $\dim(W')$.

\textbf{Case 1}. $\dim(W') = 1$. \quad Let $u_{i_1}, u_{i_2} \in \mathcal B_2$
be such that $B(u_{i_1}, u_{i_2}) \neq 0$. Since $\dim(W') = 1$, the singleton set $\mathcal B_{W'} := \{B(u_{i_1}, u_{i_2})\}$ is a basis of $W'$, and for each $w \in W'$, there exists $a_w \in F$ such that $w = a_w B(u_{i_1}, u_{i_2})$. Thus, $w = B(a_wu_{i_1}, u_{i_2}) \subseteq B({\rm span}(\mathcal B_2) \times {\rm span}(\mathcal B_2))$.

\textbf{Case 2}. $\dim(W') = 2$. \quad Let $u_{i_1}, \dots, u_{i_4} \in \mathcal B$ be such that 
$\mathcal B_{W'} := \{B(u_{i_1}, u_{i_2}), B(u_{i_3}, u_{i_4})\}$ is a basis of $W'$.
We consider the linear combination $w = a_1 B(u_{i_1}, u_{i_2}) + a_2 B(u_{i_3}, u_{i_4}) \in W'$, where $a_1, a_2, \in F$. We have the following two possibilities.

\begin{enumerate}
\item[(a).] $i_1, i_2, i_3$ and $i_4$ are not all distinct. \quad It is enough to examine the case when $i_1 = i_3$ or $i_2 = i_4$. If $i_1 = i_3$, then $w = B(u_{i_1}, a_1 u_{i_2} + a_2 u_{i_4}) \in B({\rm span}(\mathcal B_2) \times {\rm span}(\mathcal B_2))$. The case $i_2 = i_4$ is similar.

\item[(b).] $i_1, i_2, i_3$ and $i_4$ are all distinct. \quad We reenumerate the elements of $\mathcal B$ so that $i_1 = 1, i_2 = 2, i_3 = 3$, and $i_4 = 4$. Let $D = (d_{i,j})_{e_{i,j} \in E}$ be a presentation of $w$ and $\Gamma$ be the corresponding weighted graph in $4$ vertices containing edges $e_{1,2}$ and $e_{3,4}$. Let $E$ be the edge set for $\Gamma$. If $E = \{e_{1,2}, e_{3,4}\}$, then $B(u_i, u_j) = 0$, whenever $i \in \{1,2\}$ and $j \in \{3,4\}$. In that case, 
$$w = a_1 B(u_{1}, u_{2}) + a_2 B(u_{3}, u_{4}) = B(a_1u_1 + a_2u_3, u_2+u_4),$$
which clearly lies in $B({\rm span}(\mathcal B_2) \times {\rm span}(\mathcal B_2))$. 
Thus we assume that $\Gamma$ has an edge $e_{i,j}$ with $i \in \{1,2\}$ and $j \in \{3,4\}$. Without loss of generality, let it be $e_{1,3}$. Then $B(u_1, u_3) \ne 0$, and hence either $\{B(u_{1}, u_{2}), B(u_{1}, u_{3})\}$ or $\{B(u_{1}, u_{3}), B(u_{3}, u_{4})\}$ is a basis of $W'$. Now, the case 2(a) takes over and we have
$w \in B({\rm span}(\mathcal B_2) \times {\rm span}(\mathcal B_2))$.
\end{enumerate}

\textbf{Case 3}. $\dim(W') = 3$. \quad Let $u_{i_1},\dots u_{i_6} \in \mathcal B_2$ be such that
$$\mathcal B_{W'} := \{B(u_{i_1}, u_{i_2}), B(u_{i_3}, u_{i_4}), B(u_{i_5}, u_{i_6})\}$$
is a basis of $W'$.
Let $w = a_1 B(u_{i_1}, u_{i_2}) + a_2 B(u_{i_3}, u_{i_4}) + a_3 B(u_{i_5}, u_{i_6}) \in W'$. We first assume that $\{i_{1}, i_{2}\} \cap \{i_{3}, i_{4}\} \cap \{i_5, i_6\} \ne \phi$. Without loss of generality, let $1 =i_{1} = i_{3} = i_5$. Then $w = a_1 B(u_{i_1}, u_{i_2}) + a_2 B(u_{i_3}, u_{i_4}) + a_3 B(u_{i_5}, u_{i_6}) = B(u_{i_1}, a_1 u_{i_2} + a_2 u_{i_4} + a_3 u_{i_6})$, which clearly lies in $B({\rm span}(\mathcal B_2) \times {\rm span}(\mathcal B_2))$. Therefore, let $\{i_{1}, i_{2}\} \cap \{i_{3}, i_{4}\} \cap \{i_5, i_6\} = \phi$. 

It is enough to show that for some presentation of $w$, the corresponding weighted graph has consistent labeling.
Let $D = (d_{i,j})_{e_{i,j} \in E}$ be a presentation of $w$ with respect to the basis $\mathcal B_{W'}$, and $\Gamma$ be the corresponding weighted graph. If $\Gamma$ is a tree, then by Lemma $3.1$ of \cite{KK_2025(1)}, it admits a consistent labeling and hence $w \in B({\rm span}(\mathcal B_2) \times {\rm span}(\mathcal B_2))$. Furthermore, if $\Gamma$ is a cycle, then by Corollary $3.6$ and Lemma $3.7$ of \cite{KK_2025(1)}, it admits a consistent labeling, and again $w \in B({\rm span}(\mathcal B_2) \times {\rm span}(\mathcal B_2))$. So, we assume that $\Gamma$ is neither a tree, nor a cycle. Let $V$ be the vertex set of $\Gamma$. If $|V| \leq  3$, then $\Gamma$ is either a tree or a triangular cycle, and thus we assume that $4 \leq |V| < 6$. We consider the following subcases, depending on the size of $V$.

\begin{enumerate}
\item[(a).] $|V| = 4$. \quad If $\Gamma$ is a connected graph in $4$ vertices that is neither a tree nor a cycle, then it is isomorphic to either $\Gamma_1$, $\Gamma_2$ or $\Gamma_3$ in Figure \ref{4 vertices graphs table}. We analyze
these scenarios one by one.
\begin{enumerate}
\item[(i).] $\Gamma \simeq \Gamma_1$. \quad Since $\Gamma$ is free from null vertices, $d_{3,4} \ne 0$. Consequently,
$$(3,4) \in \{({i_1, i_2}), ({i_3, i_4}), ({i_5, i_6})\}.$$
Since$\{i_{1}, i_{2}\} \cap \{i_{3}, i_{4}\} \cap \{i_5, i_6\} = \phi$, upto a permutation of indices,
any choice of $(i_{1}, i_{2})$, $(i_{3}, i_{4})$ and $(i_{5}, i_{6})$ is equivalent to the choice $(i_{1}, i_{2}) = (3,4)$, $(i_{3}, i_{4}) = (1,3)$ and $(i_{5}, i_{6}) = (1,2)$. 
Recalling that $w = a_1 B(u_{i_1}, u_{i_2}) + a_2 B(u_{i_3}, u_{i_4}) + a_3 B(u_{i_5}, u_{i_6})$, the weights on $\Gamma$ are 
$$d_{3,4} = a_1, \quad d_{1,3} = a_2, \quad  d_{1,2} = a_3, \quad d_{2,3} = 0.$$
Assuming that $\Gamma$ does not have consistent labeling, we exhibit another presentation of $w$ whose corresponding weighted graph has a consistent labeling. Since $\Gamma$ does not have a consistent labeling, by Theorem \ref{4 vertices classification}, $d_{1,3} = a_2 = 0$, that is, 
$$w = a_1 B(u_{3}, u_{4}) + a_3 B(u_{1}, u_{2}).$$

Since $\{B(u_3, u_4), B(u_1, u_3), B(u_1, u_2)\}$ is a basis of $W'$, we write
$$B(u_2, u_3) = b_{3,4} B(u_3, u_4) + b_{1,3} B(u_1, u_3) + b_{1,2} B(u_1, u_2)$$

We assume that $b_{3,4} = 0$; otherwise the set $\{B(u_2, u_3), B(u_1, u_3), B(u_1, u_2)\}$ would form a basis of
$W'$, yielding a presentation of $w$ whose associated weighted graph has three vertices, a case that we have already resolved. \\

Suppose, $b_{1,2} \neq 0$. Then $\{B(u_3, u_4), B(u_1, u_3), B(u_2, u_3)\}$ is a basis of $W'$, and
$$
w = a_1 B(u_3, u_4) + a_3 b_{1,2}^{-1} (B(u_2,u_3) - b_{1,3} B(u_1,u_3)).
$$
yields a presentation of $w$ whose associated weighted graph has $0$ weight on the edge $e_{1,2}$. 
Hence, by Theorem \ref{4 vertices classification} it admits a consistent labeling. \\

Therefore, we assume $b_{1,2} = 0$ as well. In this case, $0 \ne B(u_2, u_3) = b_{1,3} B(u_1, u_3)$. 
Thus, $b_{1,3} \ne 0$, and
\begin{align*}
w &= a_1 B(u_{3}, u_{4}) + a_3 B(u_{1}, u_{2}) \\
&= a_1 B(u_{3}, u_{4}) + a_3 B(u_{1}, u_{2}) + B(u_2, u_3) - b_{1,3} B(u_1, u_3).
\end{align*}
If $a_1 =0$ or $a_3 =0$, then $w \in B({\rm span}(\mathcal B_2) \times {\rm span}(\mathcal B_2))$, thus we assume that
$a_1$ and $a_3$ are both nonzero. This yields a presentation $D' = (d_{i,j}')_{e_{i,j}\in E}$ of $w$, where 
$d'_{3,4} = a_1 \ne 0$, $d'_{1,2} = a_3 \ne 0$, $d'_{1,3} = -b_{1,3} \neq 0$ and $d'_{2,3} = 1$. Now, by Theorem \ref{4 vertices classification}, the weighted graph associated to $D'$ has consistent labeling.

\item[(ii).]  $\Gamma \simeq \Gamma_2$. \quad We extend $0 \neq B(u_1, u_3) \in W'$ to a basis $\mathcal B_{W'}$ of $W'$.
Since we have already resolved the case where the weighted graph corresponding to a presentation of a vector $w\in W$ is a cycle, and $\{i_{1},i_{2}\} \cap \{i_{3},i_{4}\} \cap \{i_5, i_6\} = \phi$, we assume 
$$\mathcal B_{W'} = \{B(u_1, u_3), B(u_1, u_2), B(u_3, u_4)\}.$$
and write
$$w = a_1 B(u_{1}, u_{3}) + a_2 B(u_{1}, u_{2}) + a_3 B(u_{3}, u_{4}).$$
Thus, the weights on the graph corresponding to this presentation are
$$d_{1,3} = a_1, d_{1,2} = a_2, d_{3,4} = a_3, d_{1,4} = 0, d_{2,3} =0.$$

We assume that $a_1 = 0$, otherwise by Theorem \ref{4 vertices classification}, $\Gamma$ has a consistent labeling. \\

Since $\{B(u_1, u_3), B(u_1, u_2), B(u_3, u_4)\}$ is a basis of $W'$, we write
$$B(u_{1}, u_{4}) = b_{1,3} B(u_{1}, u_{3}) + b_{1,2} B(u_{1}, u_{2}) + b_{3,4} B(u_{3}, u_{4}).$$

We assume that $b_{1,2} = 0$; otherwise the set $\{B(u_1, u_3), B(u_1, u_4), B(u_3, u_4)\}$ would form a basis of
$W'$, yielding a presentation of $w$ whose associated weighted graph has three vertices, and we have already resolved that case. \\

Suppose, $b_{3,4} \neq 0$. Then $\{B(u_1, u_3), B(u_1, u_2), B(u_1, u_4)\}$ is a basis of $W'$ such that $\{i_{1}, i_{2}\} \cap \{i_{3}, i_{4}\} \cap \{i_5, i_6\} = 1$. This contradicts our assumption 
$\{i_{1}, i_{2}\} \cap \{i_{3}, i_{4}\} \cap \{i_5, i_6\} = \phi$. Therefore, we assume $b_{3,4} = 0$ as well. In this case, $B(u_{1}, u_{4}) = b_{1,3} B(u_{1}, u_{3})$. Thus, $b_{1,3} \neq 0$ and
\begin{align*}
w &= a_2 B(u_{1}, u_{2}) + a_3 B(u_{3}, u_{4}) \\
&= a_2 B(u_{1}, u_{2}) + a_3 B(u_{3}, u_{4}) + b_{1,3} B(u_{1}, u_{3}) - B(u_{1}, u_{4}).
\end{align*}

This yields a presentation $D' = (d_{i,j}')_{e_{i,j}\in E}$ of $w$, with $d'_{1,3} = b_{1,3} \ne 0$, and hence by Theorem \ref{4 vertices classification}, the weighted graph associated to $D'$ has consistent labeling.

\item[(iii).]  $\Gamma \simeq \Gamma_3$. \quad We extend $0 \neq B(u_1, u_2) \in W'$ to a basis $\mathcal B_{W'}$ of $W'$.
Since we have already resolved the case where the weighted graph corresponding to a presentation of a vector $w\in W$ is a cycle, and $\{i_{1}, i_{2}\} \cap \{i_{3}, i_{4}\} \cap \{i_5, i_6\} = \phi$, we assume by a permutation of indices, if required, that
$\mathcal B_{W'} = \{B(u_1, u_2), B(u_2, u_3), B(u_3, u_4)\}$. \\

We write 
$w = a_1 B(u_{1}, u_{2}) + a_2 B(u_{2}, u_{3}) + a_3 B(u_{3}, u_{4})$, and assume that $a_1 \ne 0$,  $a_3 \ne 0$; otherwise the corresponding weighted graph would have at most $3$ vertices, and we have resolved that case. Now, write 
$$B(u_{1}, u_{3}) = b_{1,2} B(u_{1}, u_{2}) + b_{2,3} B(u_{2}, u_{3}) + b_{3,4} B(u_{3}, u_{4}).$$ 

By an argument analogous to (i) and (ii) above, we may assume that $b_{1,2} = 0$ and $b_{3,4} = 0$, that leads to
$0 \ne B(u_{1}, u_{3}) = b_{2,3} B(u_{2}, u_{3})$, which means $b_{2,3} \ne 0$. 

Similarly, writing $B(u_{2}, u_{4})$ as a linear combination in $\mathcal B_{W'}$, there exists $c_{2,3} \in F$
such that $0 \ne B(u_{2}, u_{4}) = c_{2,3} B(u_{2}, u_{3})$. Therefore,
\begin{align*}
w &= a_1 B(u_{1}, u_{2}) + a_2 B(u_{2}, u_{3}) + a_3 B(u_{3}, u_{4}) \\
&= a_1 B(u_{1}, u_{2}) + (a_2 - c_{2,3} - b_{2,3}a_1a_3) B(u_{2}, u_{3}) \\
&\quad\quad\quad\quad\quad\quad+ a_1a_3B(u_{1}, u_{3}) + B(u_{2}, u_{4}) + a_3 B(u_{3}, u_{4}).    
\end{align*}

This yields a presentation $D' = (d_{i,j}')_{e_{i,j}\in E}$ of $w$, with $d'_{1,4} = 0$, $d'_{1,2}=a_1 \neq 0$, $d'_{1,3} = a_1 a_3 \neq 0$, $d'_{2,3} = a_2 - c_{2,3} - b_{2,3}a_1a_3$, $d'_{2,4} = 1$, $d'_{3,4} = a_3 \neq 0$.
Note that it satisfies $d'_{1,2} d'_{3,4} = d'_{1,3} d'_{2,4}$. Hence, the weighted graph corresponding to this presentation is free from defects, and by Theorem \ref{4 vertices classification}, it has a consistent labeling.
\end{enumerate}

\item[(b).] $|V| =5$. \quad Since $\mathcal B_{W'} := \{B(u_{i_1}, u_{i_2}), B(u_{i_3}, u_{i_4}), B(u_{i_5}, u_{i_6})\}$ is a basis of $W'$, where $u_{i_1},\dots, u_{i_6} \in \mathcal B_2$ and $|V| =5$, exactly two indices among ${i_1},\dots, {i_6}$ are the same. A reenumeration of indices allows us to assume that $\mathcal B_{W'} := \{B(u_{1}, u_{2}), B(u_{2}, u_{3}), B(u_{4}, u_{5})\}$. Thus,
$$w = a_1B(u_{1},u_{2}) + a_2 B(u_{2}, u_{3})  + a_3 B(u_{4}, u_{5}).$$

If $B(u_{2},u_{4}) \ne 0$, then it can be extended to a basis of $\mathcal B_W'$ of $W'$ by adding two suitable vectors from $\{B(u_{1}, u_{2}), B(u_{2}, u_{3}), B(u_{4}, u_{5})\}$. The new basis yields a presentation of $D' = (d_{i,j}')_{e_{i,j}\in E}$ of $w$ so that the corresponding weighted graph has 4 vertices, and we are done by the previous case. Thus, we assume that $B(u_{2},u_{4}) = 0$. By a similar argument, we assume $B(u_{2},u_{5}) = 0$, and therefore $\Gamma$ may be assumed to be a subgraph of the graph $\Gamma_7$ shown in Figure \ref{dim 3 Figure}.\\

\begin{figure}[h]
\begin{tabular}{ccccc}
\begin{tikzpicture}
  [scale=0.6,auto=left,every node/.style={circle,fill=blue!20}]
  \node (n3) at (9,1)      {$v_5$};
  \node (n1) at (1,5)      {$v_1$};
  \node (n4) at (1,1)      {$v_4$};
  \node (n2) at (9,5)      {$v_3$}; 
  \node (n5) at (5,9)      {$v_2$}; 
  
  \foreach \from/\to in {n1/n2,n1/n3,n1/n4,n1/n5,n2/n3,n2/n4,n2/n5,n3/n4}
      \draw (\from) -- (\to);
\end{tikzpicture}
\hspace{2 cm}
&
\begin{tikzpicture}
  [scale=0.6,auto=left,every node/.style={circle,fill=blue!20}]
  \node (n3) at (9,5)      {$v_3$};
  \node (n1) at (5,9)      {$v_2$};
  \node (n4) at (1,5)      {$v_1$};
  \node (n2) at (1,1)     {$v_4$}; 
  \node (n5) at (9,1)     {$v_5$};  
  
  \foreach \from/\to in {n2/n3,n1/n3,n1/n4,n3/n4,n2/n5,n3/n5}
      \draw (\from) -- (\to);
\end{tikzpicture}\\

$\Gamma_7$ & $\Gamma_8$ &  \\
\end{tabular}
\caption{}
\label{dim 3 Figure}
\end{figure}

Now, write
$$B(u_{1}, u_{4}) = b_{1,2} B(u_{1}, u_{2}) + b_{2,3} B(u_{2}, u_{3}) + b_{4,5} B(u_{4}, u_{5}).$$
If $b_{2,3} \ne 0 $, then $\{B(u_1, u_2), B(u_1, u_4), B(u_4, u_5)\}$ is a basis of $W'$ and we are reduced to the previous case of $4$ vertices. Thus, we assume $b_{2,3} = 0$. By a similar argument, we assume $b_{4,5} = 0$.
Hence, $B(u_{1}, u_{4}) = b_{1,2} B(u_{1}, u_{2})$. \\

Similarly, by expanding $B(u_{1}, u_{5})$ as basis vectors, we obtain a scalar $c$ such that $B(u_{1}, u_{5}) =  c B(u_{1}, u_{2})$. Arguing similarly, we obtain scalars $c',c''$ such that $B(u_{3}, u_{4}) = c' B(u_{2}, u_{3})$ and $B(u_{3}, u_{5}) = c'' B(u_{2}, u_{3})$. \\

Now, if $(b_{1,2}, c) \ne (0,0)$ and $(c', c'') \ne (0,0)$, say $b_{1,2 }\ne 0$ and $c'' \ne 0$, then the set $\{B(u_1, u_4), B(u_3, u_5), B(u_4, u_5)\}$ is a basis of $W'$ and we are reduced to the earlier case of $4$ vertices.
So, we assume that either $(b_{1,2}, c) = (0,0)$ or $(c', c'') = (0,0)$. 
If $(c', c'') = (0,0)$, then $\Gamma$ is disconnected, and thus it admits a consistent labeling because each of its connected components does so. Thus, we assume that $(c', c'') \ne (0,0)$ and $(b_{1,2}, c) = (0,0)$. Hence $B(u_{1}, u_{4}) = 0 = B(u_{1}, u_{5})$, and $\Gamma$ is a subgraph of the graph $\Gamma_8$ in Figure \ref{dim 3 Figure}.  Without loss of generality, let $c' \ne 0$. Then $\{B(u_{1}, u_{2}), B(u_{3}, u_{4}), B(u_{4}, u_{5}\}$ is a basis of $W'$. Writing $B(u_{1}, u_{3})$ as a linear combination in $\{B(u_{1}, u_{2}), B(u_{3}, u_{4}), B(u_{4}, u_{5}\}$, and following arguments similar to those used in proving $B(u_{1}, u_{4}) = b_{1,2} B(u_{1}, u_{2})$ above, we obtain that $B(u_{1}, u_{3}) = \lambda_1 B(u_{3}, u_{4})$ for some $\lambda_1 \in F$. Hence, $B(u_{1}, u_{3}) = \lambda B(u_{2}, u_{3})$, where $\lambda = \lambda_1 c'$. Thus,
\begin{align*}
w &= a_1B(u_{1},u_{2}) + a_2 B(u_{2}, u_{3}) + a_3 B(u_{4}, u_{5}) \\
&= a_1B(u_{1},u_{2}) + (a_2 -c'-\lambda) B(u_{2}, u_{3}) + B(u_{1}, u_{3}) +  B(u_{3}, u_{4}) +  a_3 B(u_{4}, u_{5}).  
\end{align*}

This yields a presentation $D' = (d_{i,j}')_{e_{i,j}\in E}$ of $w$, with 
$d'_{1,3} = d'_{3,4} = 1$, $d'_{1,2}=a_1$, $d'_{2,3} = a_2 -c'- \lambda$, $d'_{4,5} = a_3 $ and $d'_{3,5} = 0$. Then Table \ref{dim 3 Table}, provides a consistent labeling for the corresponding weighted graph. \\

\begin{table}[h]
\parbox{.6\linewidth}{
\centering
\begin{tabular}{|c|c|c|}
 \hline
 $i$ &  $\alpha_i$ &$\beta_i$\\
 \hline
 $1$ & $\beta_3^{-1}$ & $\beta_1 \in F$\\
 \hline
 $2$ & $(a_2 -c'- \lambda)\beta_3^{-1}$ & $a_1\beta_3 + (a_2 -c'- \lambda)\beta_1$\\
 \hline
$3$ & $0$ & $\beta_3 \in F^{\times}$\\
 \hline
$4$ & $-\beta_3^{-1}$ & $\beta_4 \in F$\\
 \hline
$5$ & $0$ & $-a_3\beta_3$\\
 \hline
\end{tabular} 
\caption{}
\label{dim 3 Table}
}
\end{table}

\item[(c).] $|V| = 6$. \quad After a reenumeratuion of indices, if required, we assume that $B_{W'} := \{B(u_{1}, u_{2}), B(u_{3}, u_{4}), B(u_{5}, u_{6})\}$ is a basis of $W'$. We denote $w_1 := B(u_{1}, u_{2})$, $w_2 := B(u_{3}, u_{4})$, $w_3 := B(u_{5}, u_{6})$. Suppose $B(u_1, u_4) \neq 0$, and write
$$B(u_{1}, u_{4}) = b_1 w_1 + b_2 w_2 + b_3 w_3.$$

If $b_1 \neq 0 $, then $\{B(u_1, u_4), w_2, w_3\}$ is a basis of $W'$ and hence there exists a presentation of $w$ whose corresponding weighted graph has $5$ vertices, and we are reduced to one of the previous cases. Same argument works when $b_2 \neq 0$ or $b_3 \neq 0$. \\

Thus, we assume that $B(u_{1}, u_{4}) = 0$. In fact, a similar argument shows that $B(u_i, u_j) = 0$, whenever $1 \leq i < j \leq 6$ and $(i,j) \notin \{(1,2), (3,4), (5,6)\}$. Hence, the graph $\Gamma$ is disjoint union of at most three edges; $e_{1,2}$, $e_{3,4}$ and $e_{5,6}$, which clearly admits a consistent labeling.   
\end{enumerate}
\end{proof}

The following corollary follows directly from Proposition \ref{Elememtwise Bilinear map dim 3} 

\begin{corollary}\label{Whole space Bilinear dim 3} 
Let $F$ be a field and $U, W$ be two vector spaces over $F$ whose dimension is at most countable. Let $B: U \times U \to W$ be an alternating bilinear map over $F$ such that ${\rm span}(B(U \times U)) = W$. If $\dim(W) \leq 3$, then $W = B(U \times U)$.
\end{corollary}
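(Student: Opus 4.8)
The plan is to deduce Corollary \ref{Whole space Bilinear dim 3} from Proposition \ref{Elememtwise Bilinear map dim 3} by choosing the subset $\mathcal{B}_2$ appropriately. First I would pass to the quotient: replacing $U$ by $U/U^\perp$ changes nothing about the image $B(U\times U)$ or its span $W$, since $B$ factors through $U/U^\perp \times U/U^\perp$ and $B(u+v,u'+v')=B(u,u')$ for $v,v'\in U^\perp$. So I may fix an ordered basis $\mathcal{B}=\{u_1,u_2,\dots\}$ of $U/U^\perp$ (at most countable) and work with the induced alternating map, still calling it $B$, which satisfies ${\rm span}(B(U\times U))=W$ and $\dim(W)\le 3$.

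The key step is to observe that $W$ is spanned by the countable set $\{B(u_i,u_j):i<j\}$, so since $\dim(W)\le 3$ we can select finitely many of these spanning vectors — at most three — say involving the basis elements $u_{i_1},\dots,u_{i_k}$ for some finite $k\le 6$. Let $\mathcal{B}_2:=\{u_{i_1},\dots,u_{i_k}\}$, a finite subset of $\mathcal{B}$. Then $W'={\rm span}(B(\mathcal{B}_2\times\mathcal{B}_2))$ contains those chosen spanning vectors, so $W'=W$ and in particular $\dim(W')=\dim(W)\le 3$. Now Proposition \ref{Elememtwise Bilinear map dim 3} applies directly and gives $W=W'\subseteq B({\rm span}(\mathcal{B}_2)\times{\rm span}(\mathcal{B}_2))$. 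Since ${\rm span}(\mathcal{B}_2)$ is a subspace of $U/U^\perp$, hence of the image of $U$ in $U/U^\perp$, we get $W\subseteq B(U\times U)$; combined with the trivial inclusion $B(U\times U)\subseteq {\rm span}(B(U\times U))=W$, this yields $W=B(U\times U)$.

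The only subtlety — and it is minor — is bookkeeping about the quotient by $U^\perp$: one must check that lifting the finite subset $\mathcal{B}_2\subseteq U/U^\perp$ to actual vectors in $U$ and taking their span is compatible with $B$, which is immediate because $B$ is constant on $U^\perp$-cosets in each argument. There is no real obstacle here; the entire content of the corollary has already been done in Proposition \ref{Elememtwise Bilinear map dim 3}, and this proof is simply the reduction from the ``finite subset'' formulation to the ``whole space'' formulation. One sentence handling the reduction to $U/U^\perp$ and one sentence choosing the finite spanning set of $W$ suffice.
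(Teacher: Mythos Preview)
Your proposal is correct and is exactly the reduction the paper has in mind: the paper simply states that the corollary ``follows directly from Proposition \ref{Elememtwise Bilinear map dim 3}'', and your argument spells out that deduction by choosing a finite $\mathcal{B}_2$ whose pairwise brackets already span $W$. The quotient-by-$U^\perp$ bookkeeping you mention is already implicit in the proposition's hypothesis that $\mathcal{B}$ is a basis of $U/U^\perp$, so no additional work is needed.
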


For a Lie algebra $L$ over a field $F$, the Lie bracket is an alternating bilinear map. Thus, in this case $U:= L$; $W:= L'$, the derived Lie subalgebra of $L$; $B = [~,~]$, the Lie bracket of $L$; $U^\perp = Z(L)$, the center of $L$; and $\mathcal{B}_L := \mathcal B_1 \cup \mathcal B_2$, the vector space basis of $L$; where $\mathcal B_1 $ is a basis of $Z(L)$ and $\mathcal B_2 = \{u_1, u_2 \cdots \}$. The following results follow directly from the above results on alternating bilinear maps.

\begin{theorem}\label{Elementwise Lie algebra without defects theorem}
Let $F$ be a field and $L$ be a Lie algebra over $F$ with at most countable dimension. Let $x \in L'$ be such that for some presentation $D$ of $x$, the corresponding weighted graph $\Gamma$ has four vertices and is defectless. Then $x \in [L,L]$.
\end{theorem}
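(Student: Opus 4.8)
The plan is to reduce the statement directly to Proposition \ref{Elementwise Bilinear map without defects proposition}. First I would recall the dictionary set up in the paragraph preceding the theorem: for a Lie algebra $L$ over $F$, the Lie bracket $[\,\cdot\,,\cdot\,]\colon L\times L\to L$ is an alternating bilinear map, so we may take $U:=L$, $W:=L'=\mathrm{span}([L,L])$, and $B:=[\,\cdot\,,\cdot\,]$. With these identifications the hypothesis that $\dim_F L$ is at most countable guarantees $\dim_F U$ and $\dim_F W$ are at most countable, and the very definition of $L'$ gives $\mathrm{span}(B(U\times U))=W$, so the hypotheses of Proposition \ref{Elementwise Bilinear map without defects proposition} are met.

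Next I would observe that the notion of a \emph{presentation} of an element, and of the associated weighted graph $\Gamma$, is exactly the one introduced at the start of \S\ref{section on bilinear maps and Lie algebra}, applied to $B=[\,\cdot\,,\cdot\,]$: one fixes a basis $\mathcal B_1$ of $U^\perp=Z(L)$, extends it to $\mathcal B=\mathcal B_1\cup\mathcal B_2$ with $\mathcal B_2=\{u_1,u_2,\dots\}$, writes $x=\sum_{i<j}d_{i,j}[u_i,u_j]$, forms the index set $\mathcal I$, relabels so $\mathcal I=\{1,\dots,n\}$, and builds $\Gamma$ on vertices $v_1,\dots,v_n$ with $v_i\sim v_j$ iff $[u_i,u_j]\neq 0$ and edge weight $d_{i,j}$. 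Thus a ``presentation $D$ of $x$'' in the statement of the theorem is literally a presentation of $x=w\in W$ in the sense of the proposition.

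Now I would invoke the hypothesis: for some presentation $D$ of $x$, the corresponding weighted graph $\Gamma$ has four vertices and is defectless. Proposition \ref{Elementwise Bilinear map without defects proposition} then yields $x=w\in B(U\times U)=[L,L]$, which is the desired conclusion. The only thing to check is that ``defectless'' in the sense used here — no defect of type $(4)_A$, $(4)_B$ or $(4)_C$, i.e.\ not defective in the sense of \S\ref{Labeling of graphs with four vertices} — is the same hypothesis as in the proposition; since the proposition was stated as ``the corresponding graph $\Gamma$ has $4$ vertices and is defectless'', these agree verbatim, so nothing further is needed. There is essentially no obstacle here: the content is entirely carried by Theorem \ref{4 vertices classification} and Proposition \ref{Elementwise Bilinear map without defects proposition}, and this theorem is the Lie-algebra specialization obtained by substituting the bracket for $B$; the proof is a one-line appeal to the proposition once the dictionary is spelled out.
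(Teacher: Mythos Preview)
Your proposal is correct and matches the paper's own treatment: the paper does not give a separate proof of this theorem but simply declares that it ``follow[s] directly from the above results on alternating bilinear maps,'' i.e.\ from Proposition~\ref{Elementwise Bilinear map without defects proposition} via the dictionary $U=L$, $W=L'$, $B=[\,\cdot\,,\cdot\,]$ that you spelled out. Your write-up is in fact more detailed than what the paper provides.
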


\begin{theorem}\label{Elementwise Lie algebra defects theorem}
Let $F$ be a field and $L$ be a Lie algebra over $F$ with at most countable dimension. Let $x \in L'$ be such that for each presentation $D$ of $x$, the corresponding weighted graph $\Gamma$ contains a defect of type $(m)_A$, $(4)_B$ or $(4)_C$. Then $x \notin [L,L]$.
\end{theorem}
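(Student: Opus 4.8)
The plan is to deduce the theorem directly from Proposition~\ref{Elementwise Bilinear map defects proposition} by specializing the general alternating bilinear framework to the Lie bracket. First I would record the dictionary: for a Lie algebra $L$ over $F$ with $\dim L \le \aleph_0$, the bracket $B := [\,\cdot\,,\,\cdot\,]\colon L \times L \to L'$ is an alternating bilinear map, and $\mathrm{span}(B(L \times L)) = L'$ holds by the very definition of the derived subalgebra $L'$. Moreover the orthogonal complement $U^\perp = \{v \in L : [v,v']=0 \text{ for all } v' \in L\}$ is precisely the center $Z(L)$, so the basis $\mathcal B = \mathcal B_1 \cup \mathcal B_2$ of \S\ref{section on bilinear maps and Lie algebra} becomes a basis of $L$ extending a basis $\mathcal B_1$ of $Z(L)$, and the countability of $\dim L$ guarantees these bases exist and the presentation construction applies verbatim.

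The second step is to check that the notion of a \emph{presentation} of $x \in L'$ appearing in the statement coincides with the notion of a presentation of a vector $w \in W$ produced by the process at the start of \S\ref{section on bilinear maps and Lie algebra}: one writes $x = \sum_{i<j} d_{i,j}[u_i,u_j]$ with $u_i \in \mathcal B_2$, forms the index set $\mathcal I$, relabels so that $\mathcal I = \{1,\dots,n\}$, and builds the weighted graph $\Gamma$ having an edge $e_{i,j}$ of weight $d_{i,j}$ exactly when $[u_i,u_j]\ne 0$. This is exactly the bilinear construction carried out for $B$ the bracket. Consequently the hypothesis of the theorem --- that \emph{every} presentation $D$ of $x$ yields a weighted graph $\Gamma$ containing a defect of type $(m)_A$, $(4)_B$, or $(4)_C$ --- is literally the hypothesis of Proposition~\ref{Elementwise Bilinear map defects proposition} with $U=L$, $W=L'$, and $w=x$.

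Finally I would invoke Proposition~\ref{Elementwise Bilinear map defects proposition} to conclude $x \notin B(L \times L) = [L,L]$, which is the desired assertion; this already absorbs the input of Lemmas~\ref{defect (m)_A Lemma}--\ref{defect (4)_C Lemma} (a presentation admitting a consistent labeling cannot carry any such defect, so if $x\in[L,L]$ some presentation would be defectless). I do not expect a genuine obstacle: the only point requiring care is the bookkeeping that the abstract ``presentation'' of $w$ and the Lie-theoretic ``presentation'' of $x$ denote the same object, after which the theorem is a direct transcription. (Theorem~\ref{Elementwise Lie algebra without defects theorem} follows in the same manner from Proposition~\ref{Elementwise Bilinear map without defects proposition}.)
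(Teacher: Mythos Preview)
Your proposal is correct and matches the paper's approach exactly: the paper states that Theorem~\ref{Elementwise Lie algebra defects theorem} follows directly from the results on alternating bilinear maps, i.e., by specializing Proposition~\ref{Elementwise Bilinear map defects proposition} with $U=L$, $W=L'$, $B=[\,\cdot\,,\,\cdot\,]$, $U^\perp=Z(L)$. The dictionary and bookkeeping you record are precisely what the paper sets up just before stating the theorem.
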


This leads us to the following corollary.

\begin{corollary}\label{Whole Space Lie algebra defects}
Let $F$ be a field and $L$ be a Lie algebra over $F$ with at most countable dimension. Let $\dim(L /Z(L)) = 4$. Let $x \in L'$. Then $x \in [L,L]$ if and only if there exists a presentation $D$ of $x$ such that the correspoding weighted graph $\Gamma$ is defectless.
\end{corollary}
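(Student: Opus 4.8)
The plan is to obtain this statement as the Lie-theoretic specialization of Corollary \ref{Whole Space Bilinear map defects proposition}, combined with the observation that when $\dim(L/Z(L)) = 4$ every presentation of an element of $L'$ yields a weighted graph with at most four vertices.

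First I would set up the dictionary between Lie algebras and alternating bilinear maps exactly as in the paragraph preceding Theorem \ref{Elementwise Lie algebra without defects theorem}: take $U := L$, $W := L'$, and $B := [\,\cdot\,,\,\cdot\,]$, the Lie bracket. This $B$ is alternating and bilinear, $\operatorname{span}(B(U \times U)) = \operatorname{span}([L,L]) = L' = W$ by definition of the derived subalgebra, and $U^{\perp} = \{v \in L : [v,v'] = 0 \text{ for all } v' \in L\} = Z(L)$. Hence the hypothesis $\dim(L/Z(L)) = 4$ translates to $\dim(U/U^{\perp}) = 4$, while $\dim W = \dim L' \leq \binom{4}{2}$ is finite, so all countability requirements hold.

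Next I would record the key finiteness observation. Writing $\mathcal{B} = \mathcal{B}_1 \cup \mathcal{B}_2$ as in \S\ref{section on bilinear maps and Lie algebra}, with $\mathcal{B}_1$ a basis of $Z(L)$, the set $\mathcal{B}_2 = \{u_1, u_2, u_3, u_4\}$ has exactly four elements. Therefore any presentation $D = (d_{i,j})_{e_{i,j}\in E}$ of $x$ is a weight sequence on a graph whose vertex set $\{v_i : i \in \mathcal{I}\}$ has at most four vertices. For a graph with at most four vertices, the only defect of type $(m)_A$ that can occur is $(4)_A$, since a defect of type $(m)_A$ requires at least $\tfrac{1}{2}(m+2) \geq 3$ vertices in a bad cycle and $m \geq 4$ vertices in the proximity. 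Consequently, for such graphs ``contains a defect of type $(m)_A$, $(4)_B$ or $(4)_C$'' coincides with ``is defective'' in the sense of \S\ref{Labeling of graphs with four vertices}, and ``defectless'' is precisely its negation.

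Finally I would assemble the two directions. If $x \in [L,L]$, then, as recalled in \S\ref{section on bilinear maps and Lie algebra}, some presentation $D$ of $x$ has an associated weighted graph $\Gamma$ admitting a consistent labeling; since $\Gamma$ has at most four vertices, Theorem \ref{4 vertices classification} (together with the trivial cases of trees and triangles handled by \cite[Lemmas 3.1 and 3.7]{KK_2025(1)}) shows $\Gamma$ is not defective, i.e.\ it is defectless. Conversely, if some presentation $D$ of $x$ has a defectless weighted graph $\Gamma$, then either $\Gamma$ has four vertices and Theorem \ref{Elementwise Lie algebra without defects theorem} applies directly, or $\Gamma$ has at most three vertices, hence is a tree or a triangle and admits a consistent labeling by \cite{KK_2025(1)}; in either case $x \in [L,L]$. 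Since the argument is a routine translation once the dictionary is fixed, the only point requiring genuine care — and the place I would be most careful — is matching the notion of ``defectless'' used in the statement (absence of $(4)_A$, $(4)_B$, $(4)_C$) with the bilinear-map formulation and confirming that no larger defect $(m)_A$ can arise under the dimension bound; this is exactly the finiteness observation above, so I do not anticipate a substantial obstacle.
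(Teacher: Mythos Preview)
Your proposal is correct and follows essentially the same route as the paper: the corollary is stated immediately after Theorems \ref{Elementwise Lie algebra without defects theorem} and \ref{Elementwise Lie algebra defects theorem} as their direct consequence (equivalently, as the Lie-algebra specialization of Corollary \ref{Whole Space Bilinear map defects proposition}), and your dictionary $U=L$, $W=L'$, $B=[\,\cdot\,,\cdot\,]$, $U^\perp=Z(L)$ together with the observation that $|\mathcal{B}_2|=4$ forces every presentation to have at most four vertices is exactly the intended argument. Your extra care in separating the $\leq 3$-vertex case and in noting that no $(m)_A$ with $m>4$ can appear is sound and only makes the derivation more explicit than the paper's one-line appeal.
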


From Proposition \ref{Elememtwise Bilinear map dim 3}, we obtained the following.

\begin{proposition}\label{Elememtwise Lie algebra dim 3}
Let $F$ be a field and $L$ be a Lie algebra over $F$ with at most countable dimension. Let $\mathcal B_L : =\{u_1, u_2, \cdots\}$ be an ordered basis of $L / Z(L)$, $\mathcal B_2$ be a finite subset of $\mathcal B_L$, and $W' := {\rm span}([\mathcal B_2,\mathcal B_2])$. If $\dim(W') \leq 3$, then $W' \subseteq [{\rm span}(\mathcal B_2),{\rm span}(\mathcal B_2)] \subseteq [L,L]$.
\end{proposition}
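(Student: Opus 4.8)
The plan is to obtain this statement as an immediate specialization of Proposition \ref{Elememtwise Bilinear map dim 3}, using the dictionary between alternating bilinear maps and Lie algebras recorded just above its statement. First I would put $U := L$ and let $B := [\,\cdot\,,\cdot\,] \colon L \times L \to L$ be the Lie bracket, which is alternating and bilinear; by definition of the derived subalgebra we have $W := L' = {\rm span}(B(U \times U))$, so the spanning hypothesis of Proposition \ref{Elememtwise Bilinear map dim 3} is satisfied. Next I would identify the orthogonal complement $U^{\perp} = \{v \in L : [v,v']=0 \text{ for all } v' \in L\}$ with the center $Z(L)$; consequently the ordered basis $\mathcal B_L = \{u_1, u_2, \dots\}$ of $L/Z(L)$ plays exactly the role of the basis $\mathcal B$ of $U/U^{\perp}$ appearing in Proposition \ref{Elememtwise Bilinear map dim 3}. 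Since $\dim_F L$ is at most countable, so are $\dim_F(L/Z(L))$ and $\dim_F L'$ (the latter being a subspace of $L$), so the countability hypotheses hold as well.

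With this translation in place, the finite subset $\mathcal B_2 \subseteq \mathcal B_L$ and the space $W' := {\rm span}([\mathcal B_2, \mathcal B_2]) = {\rm span}(B(\mathcal B_2 \times \mathcal B_2))$ are precisely the objects appearing in Proposition \ref{Elememtwise Bilinear map dim 3}, and the present hypothesis $\dim(W') \leq 3$ is its hypothesis. Its conclusion then gives $W' \subseteq B({\rm span}(\mathcal B_2) \times {\rm span}(\mathcal B_2)) = [{\rm span}(\mathcal B_2), {\rm span}(\mathcal B_2)]$. Finally, since ${\rm span}(\mathcal B_2)$ is a subspace of $L$, one trivially has $[{\rm span}(\mathcal B_2), {\rm span}(\mathcal B_2)] \subseteq [L,L]$, which completes the asserted chain of inclusions.

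I do not expect any genuine obstacle: the argument is essentially a verification that the abstract hypotheses specialize correctly, and indeed the paper already signals this by stating the proposition as a consequence of Proposition \ref{Elememtwise Bilinear map dim 3}. The only point deserving a sentence of care is that $\mathcal B_L$ is a basis of the quotient $L/Z(L)$ rather than of $L$ itself; this causes no trouble because $B = [\,\cdot\,,\cdot\,]$ vanishes whenever either argument lies in $Z(L)$, so it descends to a well-defined alternating bilinear map on $L/Z(L)$ (equivalently, each $u_i$ may be lifted to a genuine element of $L$, exactly as is done implicitly when presentations are built in \S\ref{section on bilinear maps and Lie algebra}). Hence the proposition follows at once.
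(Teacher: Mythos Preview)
Your proposal is correct and matches the paper's approach exactly: the paper states this proposition with no separate proof, merely noting that it is obtained from Proposition~\ref{Elememtwise Bilinear map dim 3} via the dictionary $U=L$, $B=[\,\cdot\,,\cdot\,]$, $W=L'$, $U^{\perp}=Z(L)$ recorded just before Theorem~\ref{Elementwise Lie algebra without defects theorem}. Your added remarks on countability and on lifting the $u_i$ from $L/Z(L)$ to $L$ are appropriate clarifications of points the paper leaves implicit.
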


\begin{theorem}\label{Whole space Lie algebra dim 3} 
Let $F$ be a field and $L$ be a Lie algebra over $F$ with at most countable dimension. If $\dim(L') \leq 3$, then $L'=[L,L]$.    
\end{theorem}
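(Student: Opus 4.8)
The plan is to read off the statement from the alternating‑bilinear‑map results proved earlier in this section. For a Lie algebra $L$, the Lie bracket $[\,\cdot\,,\,\cdot\,]\colon L\times L\to L'$ is an alternating bilinear map, and by definition of the derived subalgebra one has $\mathrm{span}([L,L])=L'$. Moreover $L$, and hence its subspace $L'$, has at most countable dimension. Thus the triple $(U,W,B)=(L,\,L',\,[\,\cdot\,,\,\cdot\,])$ satisfies all the hypotheses of Corollary~\ref{Whole space Bilinear dim 3}, with the dimension bound $\dim(W)=\dim(L')\le 3$ being exactly our assumption. That corollary then yields $W=B(U\times U)$, i.e. $L'=[L,L]$, which is the claim; the reverse inclusion $[L,L]\subseteq L'$ is immediate.

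If one prefers to argue through the refined statements rather than invoking the corollary directly, the reduction is the following. Fix a basis $\mathcal B_L$ of $L/Z(L)$. Any $x\in L'$ is a finite $F$‑linear combination of brackets $[u_i,u_j]$ with $u_i,u_j$ ranging over a finite subset $\mathcal B_2\subseteq\mathcal B_L$; hence $x\in W':=\mathrm{span}([\mathcal B_2,\mathcal B_2])$. Since $W'\subseteq L'$ we have $\dim(W')\le 3$, so Proposition~\ref{Elememtwise Lie algebra dim 3} (itself a specialization of Proposition~\ref{Elememtwise Bilinear map dim 3}) gives $x\in[\mathrm{span}(\mathcal B_2),\mathrm{span}(\mathcal B_2)]\subseteq[L,L]$. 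As $x$ was arbitrary, $L'\subseteq[L,L]$.

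There is no genuine obstacle remaining at this point: the real work --- the case analysis over the connected weighted graphs on four, five, and six vertices --- has already been carried out in the proof of Proposition~\ref{Elememtwise Bilinear map dim 3}. The only points worth checking carefully are bookkeeping ones: that replacing $L$ by $L/Z(L)$ is harmless because $[x,y]$ depends only on the classes of $x$ and $y$ modulo $Z(L)=U^{\perp}$, and that the countability hypothesis, needed to invoke the earlier propositions, is automatically inherited by $L'$.
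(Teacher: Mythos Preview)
Your proposal is correct and matches the paper's approach: the paper presents Theorem~\ref{Whole space Lie algebra dim 3} without a separate proof, deriving it (like the surrounding Lie-algebra statements) directly from the alternating-bilinear-map results, in particular Corollary~\ref{Whole space Bilinear dim 3} and Proposition~\ref{Elememtwise Bilinear map dim 3}. Your two routes---direct invocation of Corollary~\ref{Whole space Bilinear dim 3} with $(U,W,B)=(L,L',[\,\cdot\,,\,\cdot\,])$, or the element-by-element reduction via Proposition~\ref{Elememtwise Lie algebra dim 3}---are precisely the deductions the paper intends.
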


If $\dim(L') > 3$, then Theorem \ref{Elementwise Lie algebra defects theorem} can be used to obtain infinitely many examples with $[L,L] \ne L'$. We present one such example.
\begin{example}
Let $F$ be an arbitrary field and $L$ be the Lie algebra over $F$ defined by the presentation:
\begin{align*}
\langle x_1,x_2,\dots, x_8: [x_1,x_2] & = x_5, [x_1,x_3] = x_6, [x_1,x_4] = x_7,  [x_2,x_3] = x_8,\\ 
&[x_2,x_4] = [x_3,x_4] = 0, [x_i,x_j] = 0,\text{ if } i \text{ or } j \in \{5,6,7,8\}\rangle. 
\end{align*}

Then $L' = Z(L) = \langle x_5, x_6, x_7, x_8\rangle$ is four dimensional and $L$ is a nilpotent Lie algebra of class $2$. We claim that the element $x = x_7 + x_8 \in L'$ is not a commutator. Assuming $x \in [L,L]$, there exist $a_i, b_i \in F$;  $i \in \{1,2,3,4\}$ such that 
\begin{align*}   
x_7 + x_8 & = \left[\sum_{i=1}^{4} a_i x_i, \sum_{i=1}^{4} b_i x_i \right] \\ 
& = (a_1 b_2 - a_2 b_1)[x_1,x_2] + (a_1 b_3 - a_3 b_1)[x_1,x_3] \\
& \hspace{3.85 cm} +(a_1 b_4 - a_4 b_1)[x_1,x_4] + (a_2 b_3 - a_3 b_2)[x_2,x_3]\\
& = (a_1 b_2 - a_2 b_1)x_5 + (a_1 b_3 - a_3 b_1)x_6 + (a_1 b_4 - a_4 b_1)x_7 + (a_2 b_3 - a_3 b_2)x_8.
\end{align*} Since $\{x_5,x_6,x_7,x_8\}$ is a linearly independent set in $L$, the above equality enforces a consistent labeling on the weighted graph $\Gamma = (V,E)$, where 
$V = \{v_1, v_2, v_3, v_4\}, E = \{e_{1,2},e_{1,3}, e_{1,4}, e_{2,3}\}$, and the weights are
$d_{1,2} = 0 $, $d_{1,3} = 0$, $d_{1,4} = 1 $ and $d_{2,3} = 1$.
Thus $\Gamma$ is weighted isomorphic to the graph in Figure \ref{defect (4)_A}, that carries the defect of type $(4)_A$. 
This is a contradition to Theorem \ref{Elementwise Lie algebra defects theorem}. Thus $x \notin [L,L]$, and hence $L' \ne [L,L]$.
\end{example}


\section{Commutators and Commutator Subgroup of Nilpotent Groups of Class 2}\label{section for results on p-groups}
 
Let $G$ be a finite nilpotent $p$-group of class $2$ and exponent $p$. Let $Z(G)$ be its center and $G'$ be its commutator subgroup. Let $K(G)$ denote the set of commutators in $G$.
Let $g_1, g_2, \dots, g_m \in G$ be such that $B_G := \{g_i Z(G) : 1 \leq i \leq m\}$ is a generating set of the factor group $G/Z(G)$. Let $g, h \in G$ be such that $g = \prod_{i=1}^{m} g_i^{\alpha_i}z_1$ and $h = \prod_{i=1}^{m} g_i^{\beta_i}z_2$, where $z_1, z_2 \in Z(G)$ and $\alpha_i, \beta_i \in \mathbb{Z}$ for $1 \leq i \leq m$. Then

$$[g, h] = \prod_{1 \leq i<j \leq m} \left[g_i, g_j\right]^{\alpha_i \beta_j - \alpha_j \beta_i}.$$

Thus, the group $G'$ is generated by the set $\{\left[g_i,g_j\right] : 1 \leq i < j \leq m\}$. Let $g \in G'$. Then there exist $d_{i,j} \in \mathbb{Z}$ such that
$$\prod_{1 \leq i<j \leq m} [g_i, g_j]^{d_{i,j}}.$$ The choice of integers $d_{i,j}$'s may not be unique. 

Let 
$$\mathcal{I} := \{i : d_{i,j}\ne 0 \text{ for some $j$}\} \cup \{j : d_{i,j}\ne 0 \text{ for some $i$}\}.$$

Let $|\mathcal{I}| = n$. We permute the indices $\{1,2, \cdots, m\}$ so that $\mathcal{I} = \{1, 2, \cdots , n\}$. Now, we consider the weighted graph $\Gamma$ with vertex set $\{v_1, v_2, \dots, v_n\}$, where two vertices
$v_i$ and $v_j$ are adjacent if $[g_i, g_j] \neq 1$. On the connecting edge $e_{i,j}$, we assign the weight $d_{i,j}$
\textit{modulo} $p$. Thus, $d_{i,j}$ are regarded as elements of the finite field $\mathbb F_p$ with $p$ elements.

The weighted graph $\Gamma$ with the weight sequence $D = (d_{i,j})_{e_{i,j} \in E}$ corresponds to a system of balance equations: $x_i y_j - x_j y_i = d_{i,j}$, as $e_{i,j} \in E$. The solutions \emph{modulo} $p$ of these equations can be used to write $g$ as a commutator. It is clear that $g$ is a commutator if and only if there exists a presentation $D = (d_{i,j})_{e_{i,j} \in E}$ of $g$ such that the corresponding graph $\Gamma$ has a consistent labeling. Note that weight sequence $D$ depends on the generating set $B_G$, the element $g$, the choice of $d_{i,j}$, and the permutation that sorts out elements of $\mathcal I$ as first $n$ indices. A weight sequence obtained through this process is called a \emph{presentation} of $g$. 

The following theorem follows directly from Theorem \ref{4 vertices classification}.

\begin{theorem}\label{Elementwise group without defects theorem}
Let $G$ be a $p$-group of nilpotency class $2$ and exponent $p$. Let $g \in G'$ be such that for some presentation of $g$, the corresponding weighted graph $\Gamma$ has four vertices and is defectless. Then $g \in K(G)$.
\end{theorem}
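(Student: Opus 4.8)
The plan is to reduce the statement to the already-established Theorem \ref{4 vertices classification} by simply reading the translation between group-theoretic data and graph-theoretic data in the opposite direction. Fix $g \in G'$ with a presentation $D = (d_{i,j})_{e_{i,j}\in E}$ whose associated weighted graph $\Gamma$ has four vertices and is defectless. Since $\Gamma$ is defectless, Theorem \ref{4 vertices classification} produces a consistent labeling $v_k \mapsto (\alpha_k, \beta_k) \in \mathbb{F}_p \times \mathbb{F}_p$ of $\Gamma$; that is, scalars satisfying $\alpha_i \beta_j - \alpha_j \beta_i = d_{i,j}$ in $\mathbb{F}_p$ for every $e_{i,j}\in E$. (One small bookkeeping point: Theorem \ref{4 vertices classification} is stated for connected weighted graphs without null vertices, so first I would peel off those reductions exactly as in \S\ref{Labeling of graphs with four vertices} — isolated vertices and vertices all of whose incident weights are $0$ contribute trivially to the product, and on disjoint components one labels each component separately — so there is genuinely no loss of generality.)

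Next I would lift these scalars to integers. Choose $a_k, b_k \in \mathbb{Z}$ with $a_k \equiv \alpha_k$ and $b_k \equiv \beta_k \pmod p$. Set $g_0 := \prod_{i=1}^{n} g_i^{a_i}$ and $h_0 := \prod_{i=1}^{n} g_i^{b_i}$, using the fixed preimages $g_i \in G$ of the generators of $G/Z(G)$. Applying the commutator identity recorded in the paragraph preceding the statement,
\[
[g_0, h_0] = \prod_{1 \le i < j \le n} [g_i, g_j]^{\,a_i b_j - a_j b_i}.
\]
Because $G$ has exponent $p$ and class $2$, each $[g_i,g_j] \in G' \le Z(G)$ has order dividing $p$, so the exponent $a_i b_j - a_j b_i$ only matters modulo $p$; and modulo $p$ it equals $\alpha_i \beta_j - \alpha_j \beta_i = d_{i,j}$ whenever $e_{i,j}\in E$, while for $(i,j)$ with $[g_i,g_j]=1$ (the non-edges) the factor is trivial regardless. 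Hence $[g_0,h_0] = \prod_{e_{i,j}\in E}[g_i,g_j]^{d_{i,j}}$, which is precisely $g$ by the definition of the presentation $D$. Therefore $g = [g_0, h_0] \in K(G)$.

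The only genuinely delicate point is the interplay between the finite-field labeling and the integer exponents needed to write down actual group elements; the exponent-$p$, class-$2$ hypotheses are exactly what make this harmless, since they force both $[g_i,g_j]^p = 1$ and the bilinearity-of-the-commutator identity above to hold on the nose. Everything else is the transcription of the dictionary set up at the start of \S\ref{section for results on p-groups} together with an invocation of Theorem \ref{4 vertices classification}. I would therefore expect the write-up to be short.
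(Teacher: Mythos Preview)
Your proposal is correct and is exactly the approach the paper takes: the paper simply says the result ``follows directly from Theorem \ref{4 vertices classification},'' and what you have written is precisely the unpacking of that sentence via the dictionary set up at the start of \S\ref{section for results on p-groups}. Your extra care about the connected/no-null-vertex hypothesis and the lift from $\mathbb{F}_p$ to integer exponents is appropriate bookkeeping but introduces nothing new beyond what the paper intends.
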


The following theorem is a consequence of Lemma \ref{defect (m)_A Lemma}-- \ref{defect (4)_C Lemma}.

\begin{theorem}\label{Elementwise group defects theorem}
Let $G$ be a $p$-group of nilpotency class $2$. Let $g \in G'$ be such that for all presentations 
of $g$, the corresponding weighted graph $\Gamma$ contains a defect of type $(m)_A$, $(4)_B$ or $(4)_C$. Then $g \notin K(G)$.
\end{theorem}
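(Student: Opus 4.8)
The plan is to reduce the group-theoretic statement to the already-established graph-theoretic obstruction lemmas, exactly as the Lie algebra analogue (Theorem~\ref{Elementwise Lie algebra defects theorem}) is obtained from Propositions~\ref{Elementwise Bilinear map defects proposition} and Lemmas~\ref{defect (m)_A Lemma}--\ref{defect (4)_C Lemma}. The key observation, recorded in the discussion preceding the statement, is that $g \in K(G)$ if and only if some presentation $D$ of $g$ yields a weighted graph $\Gamma$ over $\mathbb{F}_p$ that admits a consistent labeling; the ``if'' direction uses a consistent labeling $v_k \mapsto (\alpha_k,\beta_k)$ to write $g = [\,\prod_i g_i^{\alpha_i},\ \prod_i g_i^{\beta_i}\,]$ after lifting the $\alpha_k,\beta_k \in \mathbb{F}_p$ to integer representatives, and the commutator identity $[g,h] = \prod_{i<j}[g_i,g_j]^{\alpha_i\beta_j - \alpha_j\beta_i}$ valid in a class-$2$ group, while the ``only if'' direction simply reads off a labeling from any such expression for $g$.

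First I would invoke this equivalence contrapositively: to show $g \notin K(G)$ it suffices to show that \emph{no} presentation of $g$ produces a consistently labelable weighted graph. So fix an arbitrary presentation $D$ of $g$ with associated weighted graph $\Gamma$ over $\mathbb{F}_p$. By hypothesis, $\Gamma$ contains a subgraph weighted isomorphic to one of the defect configurations $(m)_A$, $(4)_B$, or $(4)_C$. Then I would apply the appropriate lemma: Lemma~\ref{defect (m)_A Lemma} if the defect is of type $(m)_A$, Lemma~\ref{defect (4)_B Lemma} if it is of type $(4)_B$, and Lemma~\ref{defect (4)_C Lemma} if it is of type $(4)_C$. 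Each of these lemmas concludes that $\Gamma$ does not admit a consistent labeling. Since $D$ was an arbitrary presentation, we conclude that no presentation of $g$ yields a consistently labelable graph, and hence $g \notin K(G)$.

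The one point that deserves a sentence of care — and the only place where this argument is not a verbatim transcription of the bilinear-map case — is that here the weights live in the specific finite field $\mathbb{F}_p$ rather than an arbitrary field $F$. But the defect lemmas are stated and proved for weighted graphs over an arbitrary field, so they apply in particular to $F = \mathbb{F}_p$; no modification is needed. One should also note that the equivalence between commutator-membership and existence of a consistently labelable presentation was already established in the paragraph before the theorem (and in \cite[\S5]{KK_2025(1)}), so nothing new is required there; unlike Theorem~\ref{Elementwise group without defects theorem}, the present theorem does not even need the exponent-$p$ hypothesis, only class $2$, because we are only using the ``only if'' direction of the equivalence, which holds for any class-$2$ $p$-group.

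I do not expect a genuine obstacle here: the theorem is a direct corollary of the defect lemmas together with the presentation/consistent-labeling dictionary, and the proof is essentially two lines. If anything, the only thing to be vigilant about is making sure the quantifier structure is handled correctly — the hypothesis is a statement about \emph{all} presentations, and we use it for the particular (but arbitrary) presentation at hand, so the conclusion ``$g$ is not a commutator'' follows because being a commutator would require \emph{some} presentation to have a consistent labeling, contradicting what we have just shown for every presentation.
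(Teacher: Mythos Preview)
Your proposal is correct and follows exactly the paper's approach: the paper simply states that the theorem is a consequence of Lemmas~\ref{defect (m)_A Lemma}--\ref{defect (4)_C Lemma}, and your write-up is a faithful (indeed, more detailed) unpacking of precisely that deduction via the presentation/consistent-labeling dictionary established just before the theorem. Your remarks about the field $\mathbb{F}_p$ and the quantifier structure are accurate and add useful clarity.
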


Note that Theorems \ref{Elementwise group without defects theorem} and \ref{Elementwise group defects theorem} hold for infinite $p$-groups of nilpotency class $2$ as well. The following Corollary is a direct consequence of Theorems \ref{Elementwise group without defects theorem} and \ref{Elementwise group defects theorem}.

\begin{corollary}\label{Whole group defects theorem}
Let $G$ be a $p$-group of nilpotency class $2$. Let the factor group $G /Z(G)$ be minimally generated by at most $4$ elements. Let $g \in G'$. Then $g \in K(G)$ if and only if there exists a presentation of $g$ such that the corresponding weighted graph $\Gamma$ is free from defects of type $(4)_A$, $(4)_B$ and $(4)_C$.
\end{corollary}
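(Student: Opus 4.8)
The plan is to deduce the corollary from Theorems~\ref{Elementwise group without defects theorem} and~\ref{Elementwise group defects theorem} after a short reduction to graphs with at most four vertices. First I would fix a minimal generating set $\{g_1 Z(G), \dots, g_m Z(G)\}$ of $G/Z(G)$ with $m \le 4$ and take all presentations of $g$ with respect to it; then the index set $\mathcal I$ of any such presentation is contained in $\{1, \dots, m\}$, so the corresponding weighted graph $\Gamma$ has at most four vertices. On such a $\Gamma$ the only possible defect of type $(m')_A$ is $(4)_A$, because such a defect requires a weighted subgraph on $m'$ vertices with $m'$ even and $m' \ge 4$, forcing $m' = 4$. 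Hence, for our presentations, the statement that $\Gamma$ contains a defect of type $(m')_A$, $(4)_B$ or $(4)_C$ is equivalent to the statement that $\Gamma$ contains a defect of type $(4)_A$, $(4)_B$ or $(4)_C$, and, when $\Gamma$ has exactly four vertices, to $\Gamma$ being defective in the sense of \S\ref{Labeling of graphs with four vertices}.

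For one implication, suppose some presentation $D$ of $g$ has weighted graph $\Gamma$ free from defects of type $(4)_A$, $(4)_B$ and $(4)_C$. If $\Gamma$ has at most three vertices, then each of its connected components is a tree or a triangle, so $\Gamma$ admits a consistent labeling over $\mathbb F_p$ by \cite[Lemma 3.1]{KK_2025(1)} and \cite[Lemma 3.7]{KK_2025(1)}, whence $g \in K(G)$. If $\Gamma$ has exactly four vertices, then it is defectless, and Theorem~\ref{Elementwise group without defects theorem} yields $g \in K(G)$.

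For the converse, assuming $g \in K(G)$ I would write $g = [a, b]$ with $a = \prod_{i=1}^m g_i^{\alpha_i} z_1$ and $b = \prod_{i=1}^m g_i^{\beta_i} z_2$ for suitable $z_1, z_2 \in Z(G)$ and $\alpha_i, \beta_i \in \mathbb Z$, so that $g = \prod_{1 \le i < j \le m} [g_i, g_j]^{\alpha_i \beta_j - \alpha_j \beta_i}$ by the commutator identity for groups of nilpotency class $2$ recalled in \S\ref{section for results on p-groups}. This gives a presentation $D$ of $g$ with $d_{i,j} \equiv \alpha_i \beta_j - \alpha_j \beta_i \pmod p$, and, reducing the exponents modulo $p$, the assignment $v_k \mapsto (\bar\alpha_k, \bar\beta_k) \in \mathbb F_p \times \mathbb F_p$ is a consistent labeling of the corresponding weighted graph. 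By Lemmas~\ref{defect (m)_A Lemma}--\ref{defect (4)_C Lemma}, a weighted graph admitting a consistent labeling carries no defect of type $(m')_A$, $(4)_B$ or $(4)_C$; in particular this presentation has weighted graph free from defects of type $(4)_A$, $(4)_B$ and $(4)_C$, as required. (Alternatively, the converse is precisely the contrapositive of Theorem~\ref{Elementwise group defects theorem}, combined with the equivalence of the two defect notions noted above.)

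I expect the only delicate point to be the initial reduction, which is needed because Theorem~\ref{Elementwise group without defects theorem} is stated for graphs with exactly four vertices while the notion of a defect of type $(m)_A$ formally allows arbitrarily large even $m$. Fixing a minimal generating set of $G/Z(G)$ bounds the number of vertices by $4$, and recording that an $(m)_A$-defect with $m \ge 6$ cannot occur in a graph on four vertices (while graphs on at most three vertices always admit consistent labelings) removes the difficulty; everything else is a direct appeal to the two theorems.
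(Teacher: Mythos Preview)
Your proposal is correct and follows the paper's own approach: the paper simply states that the corollary is a direct consequence of Theorems~\ref{Elementwise group without defects theorem} and~\ref{Elementwise group defects theorem}, and you have supplied exactly the details needed to make that deduction precise---fixing a minimal generating set so that every presentation yields a graph on at most four vertices, observing that then only $(4)_A$ can arise among the $(m)_A$ defects, and handling the $\le 3$-vertex case separately via \cite[Lemmas~3.1 and~3.7]{KK_2025(1)}. Both of your arguments for the converse (the explicit construction of a consistently labeled presentation from $g=[a,b]$, and the contrapositive of Theorem~\ref{Elementwise group defects theorem}) are valid and in the spirit of the paper's one-line justification.
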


\bibliographystyle{amsalpha}
\bibliography{word-maps}

\end{document}